\newtheorem{theor}{Theorem}
\newtheorem{lemma}[theor]{Lemma}
\newenvironment{proof}{\noindent{\scshape Proof.}}{\hspace{2mm} $\square$}
\newcommand{\N}{\mathbb{N}}
\newcommand{\Z}{\mathbb{Z}}
\newcommand{\ind}{\hbox{{\small 1} \hspace*{-11pt} 1}}
\newcommand{\ep}{\epsilon}
\DeclareMathOperator{\card}{card \,}
\begin{document}

\begin{frontmatter}

\title     {The role of dispersal in interacting patches \\ subject to an Allee effect}
\runtitle  {The role of dispersal in interacting patches    subject to an Allee effect}
\author    {N. Lanchier\thanks{Research supported in part by NSF Grant DMS-10-05282.}}
\runauthor {N. Lanchier}
\address   {School of Mathematical and Statistical Sciences, \\ Arizona State University, \\ Tempe, AZ 85287, USA.}

\begin{abstract} \ \
 This article is concerned with a stochastic multi-patch model in which each local population is subject to a strong Allee effect.
 The model is obtained by using the framework of interacting particle systems to extend a stochastic two-patch model that has been
 recently introduced by Kang and the author.
 The main objective is to understand the effect of the geometry of the network of interactions, which represents potential
 migrations between patches, on the long-term behavior of the metapopulation.
 In the limit as the number of patches tends to infinity, there is a critical value for the Allee threshold below which the
 metapopulation expands and above which the metapopulation goes extinct.
 Spatial simulations on large regular graphs suggest that this critical value strongly depends on the initial distribution when
 the degree of the network is large whereas the critical value does not depend on the initial distribution when the degree is small.
 Looking at the system starting with a single occupied patch on the complete graph and on the ring, we prove analytical results
 that support this conjecture.
 From an ecological perspective, these results indicate that, upon arrival of an alien species subject to a strong Allee effect to
 a new area, though dispersal is necessary for its expansion, strong long range dispersal drives the population toward extinction.
\end{abstract}

\begin{keyword}[class=AMS]
\kwd[Primary ]{60K35}
\end{keyword}

\begin{keyword}
\kwd{Interacting particle system, Allee effect, dispersal, metapopulation.}
\end{keyword}

\end{frontmatter}


\section{Introduction}
\label{sec:intro}

\indent To understand the role of dispersal in populations subject to a strong Allee effect, Kang and the author recently introduced
 deterministic and stochastic two-patch models \cite{kang_lanchier_2011}.
 In population dynamics, the term Allee effect refers to a certain process that leads to decreasing net population growth with
 decreasing density.
 This monotone relationship may induce the existence of a so-called Allee threshold below which populations are driven toward
 extinction, a phenomenon which is referred to as strong Allee effect.
 In this paper, we continue the analysis initiated in \cite{kang_lanchier_2011} and use the framework of interacting particle
 systems to extend the stochastic two-patch model to a more general multi-patch model.
 Thinking of the set of patches as the vertex set of a graph in which each edge indicates potential dyadic interactions
 between two patches, the main objective is to understand how the geometry of the network affects the survival probability of the
 global metapopulation.
 Let $G = (V, E)$ be a graph representing the network of interactions.
 The system is a continuous-time Markov chain whose state at time $t$ is a function
 $$ \eta_t : V \to [0, 1] \quad \hbox{with} \quad \eta_t (x) = \hbox{population density at vertex} \ x. $$
 Having an Allee threshold $\theta \in (0, 1)$ and a migration factor $\mu \in (0, 1/2]$, the evolution consists of the following
 two elementary events occurring in continuous time:
\begin{list}{\labelitemi}{\leftmargin=1.5em}
 \item {\bf Mixing events} -- Each edge becomes active at rate one, which results in a fraction $\mu$ of the population at each
  of the two interacting vertices to move to the other vertex. \vspace*{4pt}
 \item {\bf Local events} -- Each vertex becomes active at rate one, which results in the population density at that vertex to
  jump from below $\theta$ to state 0 or from above $\theta$ to state 1.
  If the density is equal to the Allee threshold then it jumps to either state 0 or state 1 with probability 1/2.
\end{list}
 The inclusion of mixing events indicates that individuals can move from patch to patch through the edges of the graph.
 The strength of dispersal is thus modeled by the mean degree distribution of the network of interactions.
 Local events model the presence of a strong Allee effect in each patch:
 local populations below the Allee threshold are driven toward extinction whereas local populations above the Allee threshold
 expand, and we think of state 1 as the normalized density of a local population at carrying capacity.
 Formally, the Markov generator is given by
\begin{equation}
\label{eq:generator-1}
  \begin{array}{l}
   L_{\eta} f (\eta) \ = \ \displaystyle \sum_{(x, y) \in E} \ \ [f (\sigma_{x, y} \,\eta) - f (\eta)] \\ \hspace{80pt} + \
                           \displaystyle \sum_{x \in V} \ (\ind \{\eta (x) > \theta \} + (1/2) \,\ind \{\eta (x) = \theta \}) \ [f (\sigma_x^+ \,\eta) - f (\eta)] \\ \hspace{120pt} + \
                           \displaystyle \sum_{x \in V} \ (\ind \{\eta (x) < \theta \} + (1/2) \,\ind \{\eta (x) = \theta \}) \ [f (\sigma_x^- \,\eta) - f (\eta)] \end{array}
\end{equation}
 where $(\sigma_x^+ \,\eta) (x) = 1$ and $(\sigma_x^- \,\eta) (x) = 0$, and where
 $$ (\sigma_{x, y} \,\eta) (z_1) \ = \ \eta (z_1) + \mu \,(\eta (z_2) - \eta (z_1)) \ \ \hbox{whenever} \ \ \{z_1, z_2 \} = \{x, y \} $$
 while the state at all other vertices is unchanged.
 The stochastic two-patch model we introduced and studied in \cite{kang_lanchier_2011} is simply the process \eqref{eq:generator-1}
 when the network of interactions consists of two vertices connected by a single edge.
 The main objective was to answer the following question:
 starting with one empty patch and one patch at carrying capacity, does the inclusion of mixing events lead to a global
 extinction, i.e., both patches in state 0 eventually, or to a global expansion, i.e., both patches in state 1 eventually?
 Theorem 8 in \cite{kang_lanchier_2011} gives the following answer: when the migration factor is small, the probability of
 global extinction, respectively, global expansion, is close to one when the Allee threshold is larger than one half,
 respectively, smaller than one half.
 As the migration factor increases, the limit becomes less predictable.
 This result suggests that one half is a critical value for the Allee threshold.
 However, we literally interpreted this one half as one patch initially in state 1 divided by two patches, and also conjectured that,
 for systems in which all the patches are connected, i.e., the network of interactions is a complete graph, the critical value
 for the Allee threshold is equal to the initial fraction of patches in state 1 in the limit as the size of the system tends to
 infinity.
 This conjecture is supported by the last diagram of Figure \ref{fig:regular} which shows numerical results for the stochastic
 process \eqref{eq:generator-1} on four regular graphs with different degrees.
 In contrast, the first diagram suggests that the critical value for the Allee threshold is again equal to one half for the process
 on the ring in the limit as the size of the system tends to infinity.
 This indicates that the critical value for the Allee threshold strongly depends on the initial configuration in the presence of
 strong long range dispersal whereas the initial configuration is essentially unimportant in the presence of weak short range
 dispersal.
 The main objective of this paper is to explain at least qualitatively this difference between the process on the complete graph,
 which models strong dispersal, and the process on the ring, which models weak dispersal.

\begin{figure}[t!]
 \centering
\scalebox{0.40}{\input{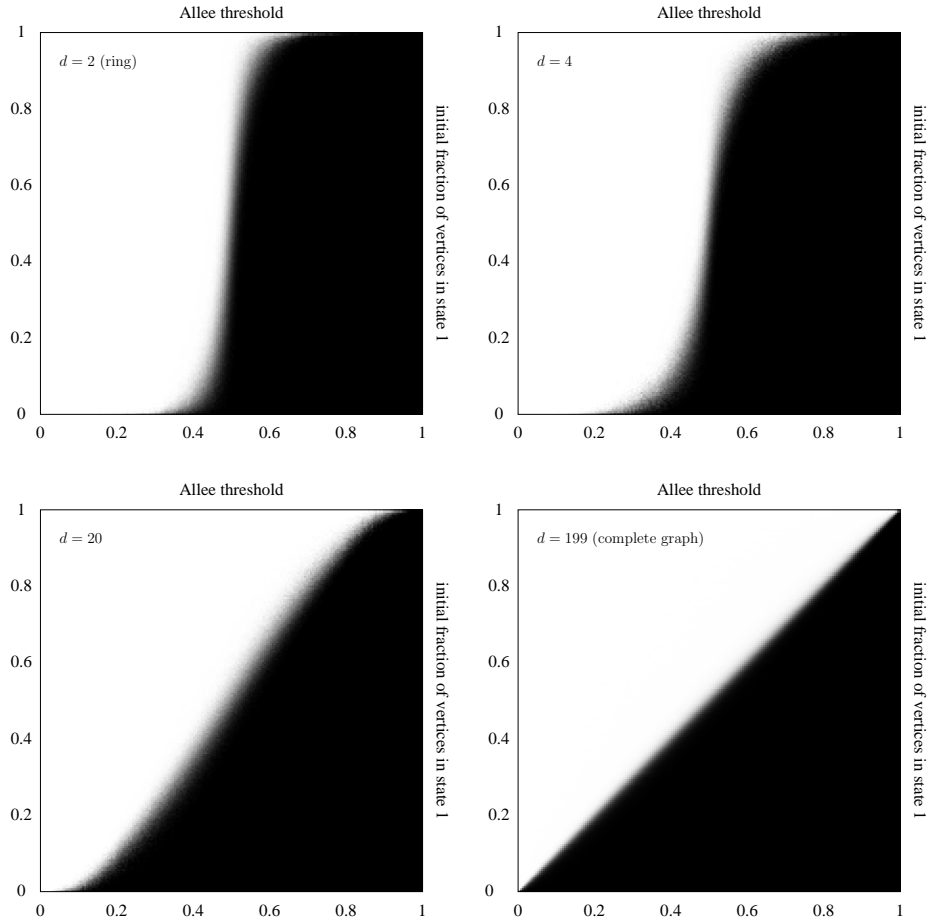}}
 \caption{\upshape Simulation results for the process on the torus $\Z / 200 \Z$ in which each vertex is connected to its $d$
  nearest neighbors.
  Each picture shows the density of vertices below/above the Allee threshold after a large number of updates as a function of
  the Allee threshold (200 values) and the initial fraction of vertices in state 1 (200 values).
  The color for each of the $200 \times 200$ parameter values is computed from the average of 100 independent realizations, and
  the color code is black for all below the Allee threshold and white for all above.
  In all the simulations, $\mu = 0.2$.}
\label{fig:regular}
\end{figure}

\pagebreak


\section{Main results}
\label{sec:results}

\indent We first assume that the process starts from the product measure in which each vertex is in state~1 with probability
 $\rho$ and in state 0 otherwise.
 Following the terminology of \cite{kang_lanchier_2011}, we call global extinction the event that the process converges to the
 ``all 0'' configuration and global expansion the event that it converges to the ``all 1'' configuration.
 Note that, on finite graphs, the process converges to one of these two absorbing states.
 In particular, the probability of global extinction and the probability of global expansion sum up to one, thus showing that
 the long-term behavior is completely characterized by the probability of global expansion
 $$ p_G (\theta, \mu, \rho) \ := \ P \,(\eta_t \equiv 1 \ \hbox{for some} \ t), $$
 which depends on the network of interactions, the Allee threshold, the migration factor, and the initial density of occupied
 patches.
 The simulation results of Figure \ref{fig:regular} suggest some monotonicity of the probability of expansion with respect to the
 Allee threshold and the initial density of occupied patches, as well as a certain symmetry between the probability of expansion
 and the probability of extinction.
 These results follow directly from standard coupling arguments for interacting particle systems that we briefly describe without
 detailed proof. \vspace*{4pt}

\noindent {\bf Monotonicity with respect to $\theta$} --
 Two processes on the same graph and starting from the same initial configuration but with different Allee thresholds can be
 coupled in such a way that the process with the smaller Allee threshold dominates the other process, which implies that the
 probability of global expansion $p_G (\theta, \mu, \rho)$ is nonincreasing with respect to $\theta$. \vspace*{4pt}

\noindent {\bf Monotonicity with respect to $\rho$} --
 Two processes on the same graph, with the same Allee threshold, and with the same migration factor can be coupled in such a
 way that if one process dominates the other one at time 0 then the domination remains true at all times.
 This implies that the probability of global expansion $p_G (\theta, \mu, \rho)$ is nondecreasing with respect to $\rho$. \vspace*{4pt}

\noindent {\bf Symmetry} --
 The process with Allee threshold $\theta$ can be coupled with the process on the same graph and with the same migration
 factor but with Allee threshold $1 - \theta$ in such a way that if at any vertex the initial population density for one process
 equals one minus the initial population density for the other process then this remains true at all times.
 This implies that
 $$ p_G (\theta, \mu, \rho) \ = \ 1 - p_G (1 - \theta, \mu, 1 - \rho), $$
 and explains the symmetry in the four simulation pictures of Figure \ref{fig:regular}. \vspace*{4pt}

\noindent {\bf The process starting with a single occupied patch} --
 We now return to the main objective of this paper, which is to understand the effect of the geometry of the network on the
 invadability of species subject to a strong Allee effect.
 This aspect is mathematically more difficult to understand because two processes on different graphs cannot be coupled in such
 a way that one process dominates the other one.
 Our analysis focuses on the extreme cases of the ring and the complete graph corresponding to the first and last diagrams
 of Figure \ref{fig:regular}. Let
\begin{equation}
\label{eq:expansion}
 \begin{array}{rcl}
     p_N^+ (\theta, \mu, \rho) & = & \hbox{the probability of global expansion for the process} \\
                               &   & \hbox{on the complete graph with $N$ vertices} \vspace*{4pt} \\
     p_N^- (\theta, \mu, \rho) & = & \hbox{the probability of global expansion for the process} \\
                               &   & \hbox{on the ring with $N$ vertices} \end{array}
\end{equation}
 where the $+$ and $-$ superscripts allude respectively to the fact that the complete graph is the connected regular graph with the
 largest degree while the ring is the connected regular graph with the smallest degree.
 Recall that the simulation results of Figure \ref{fig:regular} suggest that
\begin{equation}
\label{eq:conjecture-complete}
  \begin{array}{rcl}
  \lim_{N \to \infty} \ p_N^+ (\theta, \mu, \rho) \ = \ 0 & \hbox{when} & \theta > \rho  \\
                                                  \ = \ 1 & \hbox{when} & \theta < \rho, \end{array}
\end{equation}
 whereas for the process on the ring starting with $\rho \in (0, 1)$,
\begin{equation}
\label{eq:conjecture-ring}
  \begin{array}{rcl}
  \lim_{N \to \infty} \ p_N^- (\theta, \mu, \rho) \ = \ 0 & \hbox{when} & \theta > 1/2  \\
                                                  \ = \ 1 & \hbox{when} & \theta < 1/2. \end{array}
\end{equation}
 Following \cite{kang_lanchier_2011} whose main objective was to understand whether an alien species established in one
 patch can expand in space, we assume from now on that the process starts with a single patch in state 1 and all
 the other patches in state 0.
 In particular, we drop the parameter $\rho$ in the probabilities \eqref{eq:expansion}.
 The following two theorems give qualitative differences between the system on the complete graph and the system on the ring
 starting with a single vertex in state 1, which supports the conjectures \eqref{eq:conjecture-complete} and \eqref{eq:conjecture-ring}.
 More precisely, our first theorem indicates that, even when the Allee threshold is very small, the system on the complete graph is
 driven toward global extinction with high probability when the number of vertices is large.
\begin{theor} --
\label{thm:complete}
 Assume that $\theta, \mu > 0$.
 Then, $p_N^+ (\theta, \mu) \to 0$ as $N \to \infty$.
\end{theor}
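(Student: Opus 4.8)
The plan is to follow the total population mass $M_t := \sum_{x \in V} \eta_t (x)$ and to prove that, starting from $M_0 = 1$, it reaches the value $N$ --- the only configuration on which global expansion can occur, since $M = N$ forces every $\eta(x) = 1$ --- with probability tending to zero. The starting point is that mixing events conserve $M$, so the mass evolves only through the local events. Applying the generator \eqref{eq:generator-1} to $M$ and writing $k (\eta) := \card \{x : \eta (x) > \theta \}$ for the number of vertices currently above the Allee threshold, one finds, up to a lower-order contribution from the vertices sitting exactly at $\theta$,
\[
 L M (\eta) \ = \ k (\eta) - M (\eta).
\]
In other words the mass has a restoring drift $-M$ and can only be created by local events that promote an above-threshold vertex to state $1$; the whole difficulty is therefore to control $k (\eta)$ along the trajectory.

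First I would quantify the idea that, on the complete graph, mixing is so much faster than the local dynamics that it keeps the profile concentrated around its spatial average $M/N$. Introducing the empirical second moment $Q (\eta) := \sum_x \eta (x)^2$, a direct computation gives, on the complete graph,
\[
 L Q (\eta) \ = \ - \,2 \mu (1 - \mu) \,(N Q (\eta) - M (\eta)^2) \ + \ R (\eta),
\]
where the local remainder satisfies $|R (\eta)| \le N$ because each local event changes one coordinate of $Q$ by at most one. Since mixing occurs at the aggregate rate $\binom{N}{2}$, the nonnegative quantity $V (\eta) := N Q (\eta) - M (\eta)^2 = N \sum_x (\eta (x) - M (\eta)/N)^2$ is pulled down at rate of order $N V$ and is only pushed up by the bounded term $R$, so $V_t$ should stay of order $N$ with high probability. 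Feeding this back, whenever the average $M/N$ is below $\theta/2$ each above-threshold vertex contributes at least $N (\theta/2)^2$ to $V$, whence $k (\eta) \le 4 V (\eta) / (\theta^2 N)$ is bounded by a constant $C = C (\theta, \mu)$ on the event $\{V_t = O (N) \}$.

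With these two ingredients the conclusion would follow from a hitting estimate: as long as $M_t \le \theta N / 2$ (so the average is below $\theta/2$) and $V_t \le C' N$, the drift obeys $L M_t = k (\eta) - M_t \le C - M_t < 0$ as soon as $M_t > C$, so $M_t$ behaves like a bounded-drift subcritical process which, started from $M_0 = 1 \ll \theta N$, climbs all the way to $N$ only with probability tending to zero. This recovers exactly the heuristic behind \eqref{eq:conjecture-complete}: a single occupied patch corresponds to an effective initial density $1/N \to 0$, which is below any fixed $\theta > 0$, and fast mixing flattens the profile below the threshold before the Allee effect can amplify it.

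The delicate point, and the step I expect to be the main obstacle, is to run the two estimates \emph{simultaneously} rather than separately. The bound $k = O (1)$ is valid only while $V_t$ is small, whereas $V_t$ can be transiently inflated precisely by the mass-creating local events one is trying to rule out, since boosting a vertex from just above $\theta$ to $1$ raises $Q$ and hence $V$. One must therefore assemble a single stopping-time argument --- for instance a supermartingale combining $M_t$ with a controlled functional of $V_t$, stopped at the first time either the mass exceeds $\theta N / 2$ or the variance leaves its typical $O (N)$ band --- and show that the excursions of $V_t$ above order $N$ are too short and too infrequent, on the $N$-accelerated mixing time scale, to let $M_t$ make its way from $O (1)$ up to $N$. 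Handling the non-generic vertices exactly at $\theta$ is a minor technicality, absorbed into $k$ at the cost of the bounded constant $\tfrac12 - \theta$.
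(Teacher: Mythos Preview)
Your route is quite different from the paper's, and as you yourself flag, it remains a strategy rather than a proof: the joint control of $M_t$ and $V_t$ is never carried out. The circularity you name is genuine---a single local event at an above-threshold vertex can raise $V$ by order $N$ (boosting $\eta(x)$ from just above $\theta$ to $1$ adds roughly $N(1-\theta^2)$ to $NQ$ when $M/N$ is small), such events fire at rate $k$, and your bound $k\le 4V/(\theta^2 N)$ presupposes that $V$ is already controlled. Closing this loop would require either a Lyapunov function combining $M$ and $V$ or a quantitative excursion bound for $V_t$, neither of which you supply. There is also a slip: to pass from $LQ$ to $LV=N\,LQ-L(M^2)$ you must handle the local part of $L(M^2)$, which is of order $kM$ and is not absorbed in your remainder $R$.

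The paper sidesteps all of this by working on a vanishingly short time scale and never letting a local event occur. It couples $\eta_t$ with the mixing-only process $\xi_t$ and uses that on the complete graph each occupied vertex is hit by a mixing event at rate $N-1$. Fixing $n$ with $(1-\mu)^n<\theta$ and $T_N:=n\ln(\ln N)/N\to 0$, it shows that by time $T_N$, with probability $1-o(1)$: no two occupied vertices have ever interacted (so the genealogy is a binary tree), every occupied vertex has undergone at least $n$ mixings and hence has density at most $(1-\mu)^n<\theta$, and the number of occupied vertices is at most $(\ln N)^{O(1)}$. The total intensity of local events at occupied vertices over $[0,T_N]$ is therefore $o(1)$, so with probability $1-o(1)$ no local event fires before $\xi_t\in\Omega_-$; hence $\eta_{T_N}\in\Omega_-$ as well, and Lemma~\ref{lem:upper-lower} gives extinction. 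In short, the paper's argument is a fast-dilution estimate that finishes before the Allee dynamics can act even once, whereas your drift picture tries to absorb arbitrarily many local events over an unbounded horizon---which is exactly why the obstacle you identify arises for you but not for the paper.
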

 In contrast, when the Allee threshold is small enough, the system on the ring expands globally with a positive probability
 that does not depend on the number of vertices.
\begin{theor} --
\label{thm:ring}
 Assume that $\theta < \mu^2 \,(1 - \mu)^{1140}$.
 Then, $\inf_N \,p_N^- (\theta, \mu) > 0$.
\end{theor}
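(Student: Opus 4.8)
The plan is to show that a single occupied patch on the ring seeds a block of consecutive high--density vertices which grows, with probability bounded below in $N$, until it covers the whole ring. The first observation makes the target cheap. Since a mixing event $\sigma_{x,y}$ replaces $\eta(x),\eta(y)$ by convex combinations of themselves, while a local event at a vertex of density exceeding $\theta$ sends that vertex to $1$, the set of configurations in which \emph{every} vertex has density above $\theta$ is invariant; being unable to reach the all--$0$ state, such a configuration converges almost surely to the all--$1$ configuration. Hence it suffices to reach a time at which every vertex exceeds $\theta$. Writing $S_t$ for the number of vertices in the maximal block of consecutive vertices that contains the origin and on which the density exceeds $\theta$, I would reduce the theorem to showing that $S_t$ reaches $N$ before $0$ with probability bounded below uniformly in $N$.

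Next I would record that the block can change only at its two ends. An interior block vertex is adjacent only to other block vertices, so by convexity of mixing its density never drops below the current block minimum; only the two frontier vertices touch the empty exterior and can lose mass. The elementary advance of, say, the right front at $b$ with empty neighbour $b+1$ is a mixing along $(b,b+1)$, which charges $b+1$ to $\mu\,\eta(b)$, followed by a local event at $b+1$, which snaps it to $1$ via $\sigma_{b+1}^+$; the source $b$ only drops to $(1-\mu)\,\eta(b)$ and is itself restored to $1$. Because $\theta$ is minuscule, the only way a front can instead retreat is for a frontier vertex to be drained below $\theta$ and then killed by its local clock through $\sigma_{b+1}^-$.

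The heart of the argument is the quantitative control of this drainage, and this is where the hypothesis is spent. Once charged, a frontier vertex sits at density at least $\mu^2$ --- the exponent $2$ allowing for the worst case in which it was charged indirectly, through a neighbour that was itself only at density $\mu$ --- and each subsequent mixing with the empty exterior multiplies its density by at most $1-\mu$. The assumption $\theta<\mu^2(1-\mu)^{1140}$ therefore guarantees a grace period of at least $1140$ drainage mixings before its density could reach $\theta$. Since its local clock rings at rate one, the frontier vertex snaps to state $1$ --- advancing the front --- before being drained below $\theta$ with a probability bounded below by a constant $c=c(\mu,\theta)>0$ independent of $N$; a retreat requires the rare competing event that roughly $1140$ drainage mixings precede the local event. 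Consequently $S_t$ increases at rate bounded below by some $r_+>0$ and decreases at rate bounded above by some $r_-<r_+$, uniformly in $N$ and in the configuration.

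Finally I would compare $S_t$ with a nearest--neighbour birth--death chain on $\{0,1,\dots,N\}$ with birth rate $r_+$, death rate $r_-$, absorbed at both ends. Such a chain started at a fixed height $k\geq 1$ reaches $N$ before $0$ with probability $(1-(r_-/r_+)^{k})/(1-(r_-/r_+)^{N})$, which stays bounded below by $1-(r_-/r_+)^{k}>0$ as $N\to\infty$. Since the single initial patch builds a block of some fixed size $k$ through finitely many prescribed events, an occurrence of probability bounded below uniformly in $N$, the two estimates combine to give $\inf_N p_N^-(\theta,\mu)>0$. I expect the main obstacle to be precisely the passage in the third paragraph: turning the heuristic ``fronts advance more than they retreat'' into an honest stochastic domination that is uniform in $N$ and robust to the uncontrolled order in which the Poisson clocks ring. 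This forces one to track the minimum density along the block through every intervening mixing event of a regeneration window, and it is exactly this bookkeeping that fixes how small $\theta$ must be and produces the explicit threshold $\mu^2(1-\mu)^{1140}$.
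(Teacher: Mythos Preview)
Your outline differs substantially from the paper's argument. The paper does not track a maximal block or compare to a birth--death chain; it runs a block construction coupling the process to one--dependent oriented site percolation. One fixes $T=95$, declares a site $(x,n)\in\mathcal H$ good when an event $\Omega(x,n)$ built from the graphical representation in the window $(x-2,x+2)\times(2nT,2(n+1)T)$ holds, and shows (Lemma~\ref{lem:invasion}) that goodness propagates a lower bound $\eta_{2nT}(x)>a$ to $\eta_{2(n+1)T}(x\pm1)>a$ for a fixed constant $a<(1-\mu)^{4T}$. A direct Poisson estimate (Lemma~\ref{lem:good-event}) gives $P(\Omega)\geq1-3^{-36}>p_c$, so the wet cluster percolates with positive probability; a separate filling argument (Lemma~\ref{lem:expansion}) then shows that every patch between the leftmost and rightmost wet sites exceeds $\theta$, and Lemma~\ref{lem:upper-lower} transfers this to the finite ring. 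The hypothesis $\theta<\mu^2(1-\mu)^{1140}$ is exactly $\theta<a\,\mu^2(1-\mu)^{8T}$ with $12T=1140$; here $12T$ counts the worst--case number of mixings inside one box and $T=95$ is forced by the percolation threshold bound $3^{-36}$, not by any drainage budget.

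The gap you flag is real, and there is a second one hidden behind it. Your claim that a newly charged frontier vertex sits at density at least $\mu^2$ already assumes the neighbour that charged it had density at least $\mu$; but that neighbour may itself have been charged without an intervening local event, putting it near $\mu^2$, which charges the next vertex only to $\mu^3$, and so on. Without a mechanism that periodically resets some vertex to a height bounded below independently of the history, the frontier density is not bounded by $\mu^2$ but decays along chains of unrestored charges. For the same reason $S_t$ is not stochastically below a birth--death chain with constant rates: after one retreat the new frontier may already be depleted, so retreats are positively correlated and can cascade by several steps. The paper's percolation comparison is designed to handle exactly this, producing on a grid of spacing $2T$ anchor sites with density above a fixed $a$, which resets the bookkeeping in every box, and then filling the interior separately. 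Your route might be salvageable by inserting a renewal structure (wait for a local event at the frontier before declaring an advance), but that is essentially a one--sided version of the block construction and would produce its own constant rather than $1140$.
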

 The mysterious assumption in the previous theorem follows from a series of bounds of certain probabilities that are estimated
 based on geometric arguments and are not optimal.
 Some of these estimates appear in our proof and the other ones in the calculation of an upper bound for the critical value of
 one dependent oriented site percolation in, e.g., \cite{durrett_1984}.
 Even though the assumption of the theorem is far from being optimal, it gives at least an explicit lower bound for the critical
 value of the Allee threshold for the process on the ring.
 More importantly, the combination of both theorems show the following qualitative difference:
 for some values of the Allee threshold, the probability of global expansion is bounded from below for the process on the ring
 but vanishes to zero for the process on the complete graph as the number of vertices increases.
 This supports at least qualitatively the contrast between \eqref{eq:conjecture-complete} and \eqref{eq:conjecture-ring}.
 From an ecological perspective, this indicates that, upon arrival of an alien species to a new area, though dispersal is
 necessary for its expansion, the best strategy is to first only disperse to nearby patches, and then progressively increase
 the strength of its dispersal as the fraction of patches at carrying capacity increases.


\section{Preliminary results}
\label{sec:preliminary}

\indent This section gives some definitions and simple results that will be used repeatedly in the proof of both theorems.
 Throughout this paper, we think of the process \eqref{eq:generator-1} as being constructed from a Harris' graphical
 representation \cite{harris_1972}.
 Each edge of the graph is equipped with a Poisson process with intensity one while each vertex is equipped with a Poisson process
 with intensity one and a sequence of independent Bernoulli random variables with parameter 1/2. We write
\begin{itemize}
 \item $T_n (x, y)$ := $n$th arrival time of the Poisson process attached to edge $(x, y) \in E$, \vspace*{4pt}
 \item $U_n (x)$ := $n$th arrival time of the Poisson process attached to vertex $x \in V$, \vspace*{4pt}
 \item $B_n (x)$ := $n$th member of the Bernoulli sequence attached to vertex $x \in V$.
\end{itemize}
 All these Poisson processes and Bernoulli random variables are independent and form together a percolation structure from which
 the multi-patch model \eqref{eq:generator-1} can be constructed.
\begin{itemize}
 \item {\bf Mixing events} -- At time $t := T_n (x, y)$, we draw a double arrow along the corresponding edge to indicate
  the occurrence of the following mixing event:
  $$ \eta_t (x) \ = \ (\sigma_{x, y} \,\eta_{t-}) (x) \quad \hbox{and} \quad \eta_t (y) \ = \ (\sigma_{x, y} \,\eta_{t-}) (y). $$
 \item {\bf Local events} -- At time $t := U_n (x)$, we put a dot at vertex $x$ to indicate that
  $$ \eta_t (x) \ = \ \ind \{\eta_{t-} (x) > \theta \} \ + \ B_n (x) \ \ind \{\eta_{t-} (x) = \theta \}. $$
\end{itemize}
 In the proof of both theorems, we first study the process $(\xi_t)$ that includes mixing events but excludes local
 events whose dynamics is therefore described by the Markov generator
\begin{equation}
\label{eq:generator-2}
  L_{\xi} f (\xi) \ = \sum_{(x, y) \in E} \ \ [f (\sigma_{x, y} \,\xi) - f (\xi)].
\end{equation}
 Note that this process can be constructed graphically as previously by only using the Poisson processes attached to the edges of the graph.
 Note also that, since the state at each vertex is a convex combination of the states of the vertices at earlier times,
 $$ \xi_s (x) > \theta \ \ \hbox{for all} \ x \in V \quad \hbox{implies that} \quad \xi_t (x) > \theta \ \ \hbox{for all} \ (x, t) \in V \times (s, \infty) $$
 and the analogous implication obtained by flipping the inequalities.
 Following the terminology introduced in \cite{kang_lanchier_2011}, we call respectively upper/lower configurations the sets
 $$ \begin{array}{rcl}
    \Omega_+ & = & \{\xi : V \to [0, 1] \ \hbox{such that} \ \xi (x) > \theta \ \hbox{for all} \ x \in V \} \vspace*{4pt} \\
    \Omega_- & = & \{\xi : V \to [0, 1] \ \hbox{such that} \ \xi (x) < \theta \ \hbox{for all} \ x \in V \} \end{array} $$
 and observe that the previous implication means that, once the process \eqref{eq:generator-2} hits the set of upper configurations,
 it stays in this set forever.
 By definition of the Allee threshold, the same holds for the original process \eqref{eq:generator-1} from which we deduce the
 following lemma.
\begin{lemma} --
\label{lem:upper-lower}
 For the process \eqref{eq:generator-1} on a finite graph,
 $$ \begin{array}{rcl}
    \eta_t \equiv 1 \ \ \hbox{for some} \ t & \hbox{if and only if} & \eta_t \in \Omega_+ \ \hbox{for some} \ t \vspace*{2pt} \\
    \eta_t \equiv 0 \ \ \hbox{for some} \ t & \hbox{if and only if} & \eta_t \in \Omega_- \ \hbox{for some} \ t.
    \end{array} $$
\end{lemma}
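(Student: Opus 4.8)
The plan is to treat the two equivalences in parallel, to dispose of the ``only if'' directions immediately, and to concentrate on the two ``if'' directions, which carry all the content. Since $\theta \in (0,1)$, the configuration $\eta_t \equiv 1$ has $\eta_t (x) = 1 > \theta$ at every vertex, so $\eta_t \in \Omega_+$; likewise $\eta_t \equiv 0$ gives $\eta_t \in \Omega_-$. This settles the forward implications. For the converses I would first record the absorption property already observed in the text: once $\eta_t \in \Omega_+$ the process stays in $\Omega_+$ forever. Indeed, a mixing event replaces two coordinates by convex combinations of values exceeding $\theta$, hence still exceeding $\theta$, while a local event at a vertex with density above $\theta$ sends it to $1 > \theta$; so neither elementary event can drive a coordinate down to or below $\theta$, and $\Omega_+$ is closed under the dynamics. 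The symmetric statement shows $\Omega_-$ is closed.

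The heart of the matter is then to show that, from inside $\Omega_+$, the process almost surely reaches $\equiv 1$. The delicate point, which I expect to be the main obstacle, is that a coordinate that has just jumped to state $1$ can be pulled strictly below $1$ by a later mixing event (while, of course, remaining above $\theta$); consequently one cannot argue coordinatewise that each vertex reaches and retains the value $1$, and a coordinated argument is needed. I would exhibit a single favorable event of positive probability. Fix a time $t_0$ with $\eta_{t_0} \in \Omega_+$ and consider the unit interval $[t_0, t_0+1]$. Let $A$ be the event that no mixing event occurs on any edge during this interval and that every vertex is the site of at least one local event during it. On $A$ the only updates are local events acting on a configuration that, by the previous paragraph, remains in $\Omega_+$, so each local event sends its vertex to $1$, and once every vertex has fired at least once the configuration equals $\equiv 1$. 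Since the Poisson clocks on the $|E|$ edges and the $|V|$ vertices are independent, $P(A) \ge e^{-|E|} (1 - e^{-1})^{|V|} =: c > 0$, a constant depending only on the finite graph.

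To upgrade positive probability to probability one I would iterate over the disjoint intervals $[t_0 + n, t_0 + n + 1]$, $n \ge 0$. Because $\Omega_+$ is absorbing, the configuration lies in $\Omega_+$ at the left endpoint of each such interval, and the analogue $A_n$ of the favorable event depends only on the graphical representation inside the $n$th interval; hence the $A_n$ are independent with $P(A_n) \ge c$. Therefore $P(\text{no } A_n \text{ occurs among the first } k \text{ intervals}) \le (1 - c)^k \to 0$, so almost surely some $A_n$ occurs and the process hits $\equiv 1$. On the event $\{\eta_t \in \Omega_+ \text{ for some } t\}$ this yields $\{\eta_s \equiv 1 \text{ for some } s\}$ almost surely, which together with the trivial forward implication gives the first equivalence. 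The second follows verbatim after exchanging $\Omega_+$ with $\Omega_-$, state $1$ with state $0$, and ``above $\theta$'' with ``below $\theta$''.
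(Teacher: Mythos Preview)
Your argument is correct. The paper does not give an explicit proof of this lemma: it merely records the absorption property of $\Omega_+$ and $\Omega_-$ and then states the lemma as an immediate consequence, so your write-up actually supplies the step the paper skips, namely that from inside $\Omega_+$ the process reaches the configuration $\equiv 1$ almost surely. Your route---exhibiting a favorable event of probability at least $e^{-|E|}(1-e^{-1})^{|V|}$ on each unit interval (no edge clock rings, every vertex clock rings) and iterating via independence of the graphical representation on disjoint intervals---is a clean and standard way to make this rigorous. One could alternatively appeal to the statement in Section~2 that the finite-graph process a.s.\ converges to one of the two absorbing configurations and then use absorption of $\Omega_+$ to rule out $\equiv 0$, but that statement is itself asserted without proof there and would in any case be justified by essentially the same argument you give.
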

 Lemma \ref{lem:upper-lower} is one of the keys to proving both theorems.
 According to the lemma, it suffices to prove that, with the appropriate probability, the process on the complete graph hits a
 lower configuration whereas the process on the ring hits an upper configuration.


\section{The process on the complete graph}
\label{sec:complete}

\indent This section is devoted to the proof of Theorem \ref{thm:complete}.
 As previously mentioned, the first step is to study the process \eqref{eq:generator-2} that excludes local events but includes
 mixing events.
 It is obvious that, when starting with a single vertex in state 1, this process eventually hits a lower configuration provided
 the number of vertices is sufficiently large.
 The key to the proof is to show that the time to hit a lower configuration can be made arbitrarily small, which relies
 on large deviation estimates for several random variables that we now define.
 We call collision a mixing event that involve two occupied vertices, i.e., a collision occurs at time $t$ whenever
 $$ t \ = \ T_n (x, y) \quad \hbox{and} \quad \min (\xi_{t-} (x), \xi_{t-} (y)) \neq 0. $$
 Then, we define the three random variables
 $$ \begin{array}{rcl}
    \hbox{time to dispersion} & : & \tau_D \ := \ \inf \,\{t : \xi_t \in \Omega_- \} \vspace*{4pt} \\
    \hbox{time to collision}  & : & \tau_C \ := \ \hbox{first time a collision occurs} \vspace*{4pt} \\
    \hbox{number of occupied patches} & : & |\xi_t| \ := \ \card \{x \in \Z / N \Z : \xi_t (x) \neq 0 \}. \end{array} $$
 We will prove that, with probability close to one when $N$ is large,
\begin{equation}
\label{eq:events}
  \tau_D \ \leq \ T_N \quad \hbox{and} \quad \tau_C \ > \ T_N \quad \hbox{and} \quad |\xi_{T_N}| \ \leq \ 4^{N T_N}
\end{equation}
 for some $T_N$ that tends to zero as $N \to \infty$.
 The probability of the events in \eqref{eq:events} will be estimated backwards by conditioning, i.e., the probability
 involving the time to collision is obtained by conditioning on the number of occupied patches, while the probability
 involving the time to dispersion is obtained by conditioning on the time to collision.
 To complete the proof, we will return to the process \eqref{eq:generator-1} and use our estimates for the
 probability of the first and last events in \eqref{eq:events} to prove that the probability that a local event occurs
 in any of the occupied patches before the time to dispersion tends to zero as the number of vertices $N \to \infty$. \vspace*{4pt}

\noindent {\bf Mapping to a dynamic graph} --
 To estimate the probability of the events in \eqref{eq:events}, we first define a mapping to visualize the evolution of
 the process \eqref{eq:generator-2} through a dynamic graph, i.e., a continuous-time Markov chain whose state at time $t$
 is a random oriented graph
 $$ H_t \ := \ (V_t, E_t) \quad \hbox{where} \quad V_t \ \subset \ V \times \N. $$
 The dynamic graph is coupled with the process \eqref{eq:generator-2} and defined as follows.
\begin{itemize}
 \item The graph $H_0$ has only one vertex, namely $(x_0, 0) \in V \times \N$ where $x_0$ is the single vertex in state 1
  initially, and no (oriented) edge. \vspace*{4pt}
 \item We call $(x, i)$ a leaf at time $t$ whenever $(x, i) \in V_t$ and $(x, i + 1) \notin V_t$. \vspace*{4pt}
 \item Assume that $(x, i) \in V_{t-}$ is a leaf and that $t = T_n (x, x')$ for some $n \geq 1$.
  Then, we define the new vertex set and the new edge set as
 \begin{equation}
 \label{eq:growth}
   \begin{array}{rcl}
    V_t & := & V_{t-} \,\cup \,\{(x, i + 1), (x', i + 1) \} \vspace{4pt} \\
    E_t & := & E_{t-} \,\cup \,\{(x, i) \to (x, i + 1), (x, i) \to (x', i + 1) \}. \end{array}
 \end{equation}
\end{itemize}
 The left-hand side of Figure \ref{fig:tree} gives a schematic picture of a realization of the process with only mixing
 events where $x_j$ denotes the $j$th vertex that becomes occupied, while the right-hand side of the figure gives a picture
 of the corresponding graph.
 Note that a vertex has a positive density at time $t$ if and only if it is the first coordinate of a leaf of $H_t$.
 It follows that
\begin{equation}
\label{eq:leaves}
 |\xi_t| \ = \ \hbox{number of leaves in} \ H_t
\end{equation}
 and, using in addition the evolution rule \eqref{eq:growth}, that a collision event results in two pairs of oriented edges
 each pointing to the same leaf.
 In particular, 
\begin{equation}
\label{eq:tree}
 \hbox{$H_t$ is an oriented binary tree with root $(x_0, 0)$ if and only if $t < \tau_C$}.
\end{equation}
 From properties \eqref{eq:growth} and \eqref{eq:tree}, it also follows that
\begin{equation}
\label{eq:path}
 (x, i) \ \hbox{is a leaf at time} \ t < \tau_C \quad \hbox{implies that} \quad \xi_t (x) \ \leq \ (1 - \mu)^i.
\end{equation}
 In fact, if $(x, i)$ is a leaf at time $t < \tau_C$ then there exists a unique oriented path from the root to this leaf
 and the density at vertex $x$ can be computed explicitly looking at the number of vertical edges in this path, but this
 property is not needed in the proof of the theorem. \vspace*{4pt}

\begin{figure}[t]
\centering
\scalebox{0.40}{\input{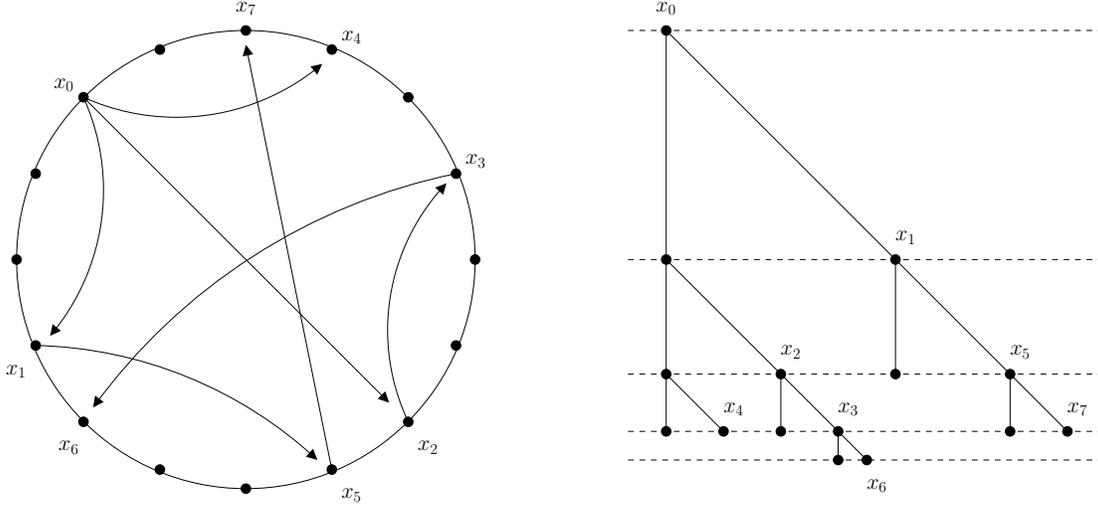}}
\caption{\upshape{Pictures related to the proof of Theorem \ref{thm:complete}}}
\label{fig:tree}
\end{figure}

\noindent {\bf Time to dispersion} --
 The next step is to use \eqref{eq:leaves}--\eqref{eq:path} and the dynamic graph representation of the process to
 estimate the probability of the three events in \eqref{eq:events}.
 Under the assumptions of the theorem, there exists $n$ such that $(1 - \mu)^n < \theta$.
 We then define
 $$ T_N \ := \ n \,\ln \,(\ln N) / N \qquad \hbox{and} \qquad K_N \ := \ 4^{n \ln \,(\ln N)} \ = \ 4^{N T_N}. $$
 The following three lemmas give estimates of the probability of the last event, the second event and the first event
 in \eqref{eq:events}, respectively, for the deterministic time $T_N$ defined above.
\begin{lemma} --
\label{lem:leaves}
 There is $a > 0$ such that $P \,(|\xi_{T_N}| > K_N) \leq (\ln N)^{- a}$ for all $N$ large.
\end{lemma}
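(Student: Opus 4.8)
The plan is to reduce the statement to a one–dimensional comparison: on the complete graph the number of occupied patches $|\xi_t|$ is, by itself, a Markov pure birth process, and it can be dominated by an explicit Yule process whose law is known exactly. First I would record how $|\xi_t|$ changes. A mixing event along an edge joining an occupied vertex to an empty vertex turns the empty vertex occupied and leaves the occupied one occupied (its new density is a convex combination $(1-\mu)\,\xi(x) > 0$), so it increases $|\xi_t|$ by exactly one; a collision redistributes mass between two already occupied vertices and keeps both positive, while an edge inside the empty set does nothing. Hence $|\xi_t|$ never decreases and jumps up by one precisely when one of the occupied–empty edges rings. When $|\xi_t| = k$ there are exactly $k(N-k)$ such edges, each carrying an independent rate-one clock, so $(|\xi_t|)$ is an autonomous pure birth chain with birth rate $\lambda_k = k(N-k)$, whether or not collisions have already occurred; this matches the dynamic-graph identity \eqref{eq:leaves}, in which each growth step \eqref{eq:growth} adds a net one leaf.

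Since $\lambda_k = k(N-k) \le kN$ for every $k$, I would dominate $(|\xi_t|)$ by the Yule process $(Y_t)$ started from one individual with per-capita birth rate $N$, that is, with birth rate $\mu_k = kN$ at state $k$. The rates $\mu_k$ are nondecreasing and $\lambda_k \le \mu_k$, so $\mu_b \ge \lambda_a$ whenever $b \ge a$; a thinning coupling in which every birth of $Y$ triggers a birth of $|\xi|$ with probability $\lambda_a / \mu_b$ then yields a joint realization with $|\xi_t| \le Y_t$ for all $t$. The Yule process is geometrically distributed, so $P \,(Y_t > K) = (1 - e^{-Nt})^{K}$ for every integer $K \ge 1$.

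It remains to substitute the parameters. By definition $e^{-N T_N} = (\ln N)^{-n}$ and $K_N = 4^{\,n \ln (\ln N)} = (\ln N)^{\,n \ln 4}$, so that
$$ P \,(|\xi_{T_N}| > K_N) \ \leq \ P \,(Y_{T_N} > K_N) \ = \ (1 - (\ln N)^{-n})^{K_N} \ \leq \ \exp \,\big( - (\ln N)^{\,n (\ln 4 - 1)} \big), $$
where the integer rounding of $K_N$ affects only the constants. Because $\ln 4 > 1$ the exponent tends to $+\infty$, so the right-hand side decays faster than any negative power of $\ln N$ and lies below $(\ln N)^{-a}$ for every fixed $a > 0$ once $N$ is large. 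The bound is thus extremely wasteful, and the only genuinely delicate point is the first step, namely verifying that the number of occupied patches evolves as an autonomous pure birth chain whose rate depends on the configuration solely through $|\xi_t|$; once this is in place, the remainder is the explicit geometric tail of the dominating Yule process.
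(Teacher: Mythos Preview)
Your argument is correct and rests on the same idea as the paper's proof: on the complete graph the count $|\xi_t|$ is an autonomous pure birth chain with rate $k(N-k)\le kN$, and hence is stochastically dominated by a Yule process with per-capita rate $N$. The difference lies only in how the tail of the dominating process is controlled. The paper bounds the mean of the Yule process and then appeals to a large deviation/Markov-type estimate to obtain $(\ln N)^{-a}$ for a specific $a>0$; you instead exploit the exact geometric law $P(Y_t>K)=(1-e^{-Nt})^K$, which yields the much stronger bound $\exp\bigl(-(\ln N)^{n(\ln 4-1)}\bigr)$ and hence works for every fixed $a>0$. Your route is cleaner and more explicit, and in particular sidesteps the paper's slightly loose first-moment computation (the stated bound $E|\xi_{T_N}|\le 2^{NT_N}$ should really be $e^{NT_N}$, which is larger but still $o(K_N)$); the price is only that one must recall the geometric distribution of a Yule process, which you do correctly.
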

\begin{proof}
 The number of leaves is maximal when there is no collision, in which case the number of leaves jumps from $i$ to $i + 1$ at rate $iN$.
 This together with \eqref{eq:leaves} implies that
 $$ E \,|\xi_{T_N}| \ = \ E \,(\hbox{number of leaves in} \ H_{T_N}) \ \leq \ 2^{N T_N} \ = \ \sqrt{K_N}. $$
 In particular, large deviation estimates for the Poisson distribution give
 $$ P \,(|\xi_{T_N}| > K_N) \ = \ P \,(|\xi_{T_N}| > 4^{N T_N}) \ \leq \ \exp (- a \ln \,(\ln N)) \ = \ (\ln N)^{- a} $$
 for a suitable constant $a > 0$ and all $N$ sufficiently large.
\end{proof}
\begin{lemma} --
\label{lem:collision}
 For all $N$ large,  $P \,(\tau_C \leq T_N) \leq 2 \,(\ln N)^{-a}$.
\end{lemma}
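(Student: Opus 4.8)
The plan is to split the target probability according to whether the number of occupied patches has stayed small, discard the unlikely event using the previous lemma, and then show that, as long as there are few occupied patches, a collision is very improbable on the short time scale $T_N$. Concretely, I would start from
\[
P \,(\tau_C \leq T_N) \ \leq \ P \,(\tau_C \leq T_N, \ |\xi_{T_N}| \leq K_N) \ + \ P \,(|\xi_{T_N}| > K_N),
\]
and control the second term at once by Lemma~\ref{lem:leaves}, which bounds it by $(\ln N)^{-a}$. Everything then reduces to estimating the first term.

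The key observation is that on the complete graph with $k$ occupied patches there are exactly $\binom{k}{2}$ edges joining two occupied vertices, and each such edge produces a collision at rate one; hence collisions form a counting process $C_t$ whose instantaneous intensity is $\binom{|\xi_t|}{2}$. I would also use that $|\xi_t|$ is nondecreasing in $t$: a mixing event between an occupied and an empty vertex creates exactly one new occupied vertex, a mixing event between two empty vertices changes nothing, and a collision leaves both vertices occupied because the two new densities are strictly positive convex combinations of positive numbers. Consequently $\{|\xi_{T_N}| \leq K_N\} = \{\tau_K > T_N\}$, where $\tau_K := \inf \,\{t : |\xi_t| > K_N\}$, so on the event in the first term the intensity $\binom{|\xi_t|}{2}$ stays below $\binom{K_N}{2}$ throughout $[0, T_N]$.

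With the rate capped, I would run a first-moment argument. Since $\tau_K$ is a stopping time and $T_N \wedge \tau_K$ is bounded, optional stopping applied to the martingale $C_t - \int_0^t \binom{|\xi_s|}{2} \,ds$ (whose stopped intensity is bounded by $\binom{K_N}{2}$, so the step is legitimate) yields
\[
E \,[C_{T_N \wedge \tau_K}] \ = \ E \int_0^{T_N \wedge \tau_K} \binom{|\xi_s|}{2} \,ds \ \leq \ \binom{K_N}{2} \,T_N \ \leq \ \tfrac{1}{2} \,K_N^2 \,T_N.
\]
On $\{|\xi_{T_N}| \leq K_N\}$ we have $\tau_K > T_N$, so any collision before $T_N$ forces $C_{T_N \wedge \tau_K} \geq 1$, and Markov's inequality bounds the first term by $\tfrac{1}{2} K_N^2 T_N$. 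Recalling $K_N = 4^{n \ln (\ln N)} = (\ln N)^{n \ln 4}$ and $T_N = n \ln (\ln N) / N$, this quantity is polylogarithmic in $N$ divided by $N$, hence smaller than $(\ln N)^{-a}$ for all $N$ large; adding the two pieces gives $P \,(\tau_C \leq T_N) \leq 2 \,(\ln N)^{-a}$.

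The only genuinely delicate point is the identification of the collision intensity $\binom{|\xi_t|}{2}$ together with the monotonicity of $|\xi_t|$, since these are precisely what let me replace the random rate by the deterministic bound $\binom{K_N}{2}$ on the good event. Once the rate is capped the remainder is an elementary first-moment estimate, and the structural fact that $K_N$ grows only polylogarithmically while $T_N$ carries a factor $1/N$ makes the resulting bound comfortably beat $(\ln N)^{-a}$, so no optimization is needed.
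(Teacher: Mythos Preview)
Your proof is correct and uses the same overall decomposition as the paper: split according to whether $|\xi_{T_N}| \leq K_N$, control the bad piece by Lemma~\ref{lem:leaves}, and then estimate the collision probability on the good event. The difference lies only in how that main term is handled. The paper argues update by update: when the tree has $i$ leaves, the next update is a collision with probability at most $i/N$, and since on the good event $i$ ranges over $1,\ldots,K_N$, a union bound gives $\sum_{i\leq K_N} i/N \leq K_N(K_N+1)/(2N)$. You instead cap the instantaneous collision rate by $\binom{K_N}{2}$ on $[0,T_N]$ (using the monotonicity of $|\xi_t|$, which you make explicit) and apply Markov's inequality to the compensated count, obtaining $\tfrac12 K_N^2 T_N$. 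Your bound carries an extra, harmless factor of $T_N$, so the paper's discrete argument is marginally sharper; on the other hand, your compensator formulation is cleaner and isolates the role of the monotonicity of $|\xi_t|$ more transparently than the paper's somewhat informal step count.
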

\begin{proof}
 Given that the graph $H_t$ has $i$ leaves, the probability of a collision at the next update of the system is equal to $i / N$.
 In particular, the conditional probability of a collision before $T_N$ given that the number of leaves at that time is smaller
 than $K_N$ is
 $$ \begin{array}{rcl}
    \displaystyle P \,(\tau_C \leq T_N \ | \,| \,\xi_{T_N}| \leq K_N) \ \leq \
    \displaystyle \sum_{i = 1}^{K_N} \ \frac{i}{N} & \leq &
    \displaystyle \frac{K_N (K_N + 1)}{2N} \\ & \leq &
    \displaystyle \frac{\exp (4n \ln \,(\ln N))}{2N} \ = \ \frac{(\ln N)^{4n}}{2N}. \end{array} $$
 This, together with Lemma \ref{lem:leaves}, implies that
 $$ \begin{array}{rcl}
  P \,(\tau_C \leq T_N) & \leq & P \,(\tau_C \leq T_N \,| \,|\xi_{T_N}| \leq K_N) \ + \ P \,(|\xi_{T_N}| > K_N) \vspace*{4pt} \\
                     & \leq & (\ln N)^{4n} / 2N \ + \ (\ln N)^{-a} \ \leq \ 2 \,(\ln N)^{-a} \end{array} $$
 for all $N$ sufficiently large.
\end{proof}
\begin{lemma} --
\label{lem:dispersion}
 Let $a > 0$ as in Lemmas \ref{lem:leaves} and \ref{lem:collision}.
 Then, for all $N$ large,
 $$ P \,(\tau_D > T_N) \ \leq \ 2^{n + 1} \,(\ln N)^{-1} \ + \ 2 \,(\ln N)^{-a}. $$
\end{lemma}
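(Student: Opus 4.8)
The plan is to combine Lemma~\ref{lem:upper-lower}'s reduction with the dynamic-graph picture \eqref{eq:leaves}--\eqref{eq:path}, conditioning on the collision-free event exactly as announced. Write $G := \{\tau_C > T_N\} \cap \{|\xi_{T_N}| \leq K_N\}$. On $G$ the graph $H_t$ is, by \eqref{eq:tree}, an oriented binary tree throughout $[0,T_N]$, and since each collision-free mixing event adds exactly one leaf, $|\xi_t|$ equals the number of leaves of $H_t$, which is nondecreasing and hence bounded by $K_N$ on all of $[0,T_N]$. First I would decompose
\begin{equation*}
 P \,(\tau_D > T_N) \ \leq \ P \,(\{\tau_D > T_N\} \cap G) \ + \ P \,(\{\tau_C \leq T_N\} \cup \{|\xi_{T_N}| > K_N\}),
\end{equation*}
and observe that the second term is at most $2 \,(\ln N)^{-a}$ by the very computation in the proof of Lemma~\ref{lem:collision}, which already unites the collision estimate with the leaf bound of Lemma~\ref{lem:leaves}. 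This accounts for the $2 \,(\ln N)^{-a}$ in the statement, so it remains to bound the first term by $2^{n+1} \,(\ln N)^{-1}$.

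Next I would turn $\{\tau_D > T_N\} \cap G$ into a combinatorial event about the tree. By the choice of $n$ we have $(1-\mu)^n < \theta$, so \eqref{eq:path} shows that if every leaf of $H_{T_N}$ has depth at least $n$, then every occupied vertex satisfies $\xi_{T_N}(x) \leq (1-\mu)^n < \theta$, i.e. $\xi_{T_N} \in \Omega_-$ and $\tau_D \leq T_N$. Hence on $G$,
\begin{equation*}
 \{\tau_D > T_N\} \ \subseteq \ \{H_{T_N} \text{ has a leaf of depth} < n\},
\end{equation*}
and a leaf of depth $< n$ exists precisely when the full binary tree of depth $n$ has not been completed, i.e. when at least one of the $2^n$ positions at depth $n$ has not yet been created. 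So I only need the probability that the cascade of branchings fails to fill the tree down to depth $n$ within time $T_N$.

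To estimate this I would lower-bound the branching rate and union over the $2^n$ depth-$n$ positions. On $G$ there are at least $N - K_N$ empty vertices at every time in $[0,T_N]$, and on the complete graph a leaf $(x,i)$ branches as soon as any of the (at least $N - K_N$) edges joining $x$ to an empty vertex rings, a collision being excluded before $\tau_C$; hence every existing leaf branches at rate at least $N - K_N$. Consequently the time at which a fixed depth-$n$ position is created is stochastically dominated by a sum of $n$ conditionally independent $\exponential(N-K_N)$ waiting times, one per ancestor (distinct Poisson clocks, via the strong Markov property), i.e. by a $\Gamma(n, N-K_N)$ variable. A union bound then gives
\begin{equation*}
 P \,(\{\tau_D > T_N\} \cap G) \ \leq \ 2^n \,P \,(\Gamma(n, N-K_N) > T_N) \ = \ 2^n \,P \,(\poisson((N-K_N) \,T_N) \leq n - 1).
\end{equation*}
Since $(N-K_N) \,T_N = n \ln (\ln N) \,(1 - K_N/N) \to \infty$ while $K_N = (\ln N)^{n \ln 4}$ is only polylogarithmic, one has $e^{-(N-K_N) T_N} = (\ln N)^{-n} (1 + o(1))$, and the Poisson left tail is $(\ln N)^{-n}$ up to a factor $O((\ln (\ln N))^{\,n-1})$; for $n = 1$ this is $(\ln N)^{-1}(1+o(1))$ and for $n \geq 2$ it is $o((\ln N)^{-1})$. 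In either case $2^n$ times this quantity is at most $2^{n+1} \,(\ln N)^{-1}$ for all $N$ large, completing the bound.

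The main obstacle is the sharpness of the tail estimate rather than the combinatorics. Bounding the filling time through its mean alone (Markov's inequality) would give only an $O(1/\ln(\ln N))$ bound, far too weak; one genuinely needs the exponentially small \emph{left} tail of the Poisson distribution, together with the fact that $T_N = n \ln(\ln N)/N$ is chosen a factor $\ln(\ln N)$ larger than the $\Theta(1/N)$ time a single branching takes, to reach the full power $(\ln N)^{-n}$ and in particular to beat $(\ln N)^{-1}$. The one piece of technical care is keeping the uniform lower bound $N - K_N$ on the branching rate valid for the whole interval $[0,T_N]$, which is exactly what restricting to $\{|\xi_{T_N}| \leq K_N\}$ (hence $|\xi_t| \leq K_N$ for all $t \leq T_N$, by monotonicity of the leaf count on $G$) provides; this is why that event is folded into the collision term through Lemma~\ref{lem:collision}.
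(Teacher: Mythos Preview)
Your proof is correct and reaches the same bound, but it is organised differently from the paper's argument, and one detour you take is unnecessary.

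The paper introduces the stopping times $\sigma_j := \inf\{t : \text{every leaf of }H_t\text{ has depth}\geq j\}$ and bounds $\sigma_{j+1}-\sigma_j$ by the maximum of $2^j$ independent $\exponential(N-1)$ variables (since every vertex of the complete graph has degree $N-1$, a leaf branches at rate exactly $N-1$ regardless of collisions). Splitting $T_N$ into $n$ equal pieces then gives
\[
P(\sigma_n>T_N)\ \le\ \sum_{j=0}^{n-1}2^j\,e^{-(N-1)T_N/n}\ \le\ 2^{n+1}(\ln N)^{-1},
\]
and one concludes via $P(\tau_D>T_N)\le P(\sigma_n>T_N)+P(\tau_C\le T_N)$ together with Lemma~\ref{lem:collision}. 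In other words, the paper takes maxima over leaves at each level and then sums over levels.

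You instead take a union bound over the $2^n$ root-to-depth-$n$ paths and bound the filling time along each path by a $\Gamma(n,\,\cdot\,)$ variable; this is the dual decomposition (sum along paths, then max over paths) and gives the same final estimate, in fact a sharper one when $n\ge 2$. The one unnecessary complication is your use of the rate $N-K_N$ and the intersection with $\{|\xi_{T_N}|\le K_N\}$: the branching rate of any leaf is exactly $N-1$ unconditionally, so you can bound $P(\{\tau_D>T_N\}\cap G)$ by the unconditional probability that some path is not completed and use $\exponential(N-1)$ waits directly, avoiding any worry about how conditioning on $G$ might distort the clock distributions. Your inequality remains valid because $N-K_N\le N-1$, but the argument is cleaner without it.
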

\begin{proof}
 Motivated by \eqref{eq:path}, we introduce the stopping times
 $$ \sigma_j \ := \ \inf \,\{t : \hbox{for each leaf} \ (x, i) \in V_t \ \hbox{we have} \ i \geq j \}. $$
 The first step is to prove that $\sigma_n \leq T_N$ with probability arbitrarily close to one when the number of vertices is large.
 Note that, according to the evolution rules \eqref{eq:growth}, we have
\begin{equation}
\label{eq:dispersion-1}
  \card \{x \in V : (x, i) \in V_t \} \ \leq \ 2^i \quad \hbox{for all} \ t \geq 0.
\end{equation}
 Moreover, since each vertex has degree $N - 1$ and is therefore involved in a mixing event at the arrival times of a Poisson process
 with intensity $N - 1$, we have
\begin{equation}
\label{eq:dispersion-2}
  P \,(\inf \,\{t : (x, i + 1) \in V_t \} - \inf \,\{t : (x, i) \in V_t \} > T) \ = \ \exp (- (N - 1) \,T)
\end{equation}
 for all $x \in V$ such that $(x, i) \in V_t$ at some time $t$, i.e., the amount of time a vertex in the dynamic graph is a leaf is
 exponential with parameter $N - 1$.
 From \eqref{eq:dispersion-1} and \eqref{eq:dispersion-2}, we deduce that the temporal increment $\sigma_{j + 1} - \sigma_j$ required
 to grow one more generation in the dynamic graph is stochastically smaller than the maximum of $2^j$ independent exponential random
 variables with the same parameter $N - 1$.
 In particular, having a collection $\ep_1, \ep_2, \ep_3, \ldots$ of independent exponential random variables with
 parameter $N - 1$, we deduce that
\begin{equation}
\label{eq:dispersion-3}
  \begin{array}{l}
   \displaystyle P \,(\sigma_n > T_N) \ \leq \
   \displaystyle \sum_{j = 0}^{n - 1} \ P \,(\sigma_{j + 1} - \sigma_j > T_N / n) \vspace*{-4pt} \\ \hspace*{20pt} \leq \
   \displaystyle \sum_{j = 0}^{n - 1} \ P \,(\max \,\{\ep_i : i \leq 2^j \} > T_N / n) \ \leq \
   \displaystyle \sum_{j = 0}^{n - 1} \ P \,(\ep_i > T_N / n \ \hbox{for some} \ i \leq 2^j) \vspace*{-4pt} \\ \hspace*{50pt} \leq \
   \displaystyle \sum_{j = 0}^{n - 1} \ 2^j \,\exp (- (N - 1) \,\ln (\ln N) / N) \ \leq \
   \displaystyle 2^{n + 1} \,(\ln N)^{-1} \end{array}
\end{equation}
 for all $N$ large.
 In other respects, using \eqref{eq:path} and recalling the definition of $n$,
\begin{equation}
\label{eq:dispersion-4}
 \begin{array}{rcl}
  \sigma_n \leq T_N < \tau_C & \hbox{implies that} &
  \xi_{T_N} (x) \leq (1 - \mu)^n < \theta \ \hbox{for each leaf} \ (x, i) \in V_{T_N} \vspace*{4pt} \\ & \hbox{implies that} &
  \tau_D \leq T_N. \end{array}
\end{equation}
 Combining \eqref{eq:dispersion-3} and \eqref{eq:dispersion-4}, and using Lemma \ref{lem:collision}, we conclude that
 $$ P \,(\tau_D > T_N) \ \leq \ P \,(\sigma_N > T_N) \ + \ P \,(\tau_C \leq T_N) \ \leq \ 2^{n + 1} \,(\ln N)^{-1} \ + \ 2 \,(\ln N)^{-a} $$
 which completes the proof.
\end{proof} \\ \\
 To conclude the proof of Theorem \ref{thm:complete}, we return to the process \eqref{eq:generator-1}.
 The proof is based on the simple observation that processes \eqref{eq:generator-1} and \eqref{eq:generator-2} are equal as long as no
 local event occurs in any of the vertices not in state 0.
 The bound on the number of occupied patches and the bound on the time to dispersion given respectively by Lemmas \ref{lem:leaves}
 and \ref{lem:dispersion} show that the time to dispersion for the process that includes local events is small as well
 with probability close to one, so the result follows from Lemma \ref{lem:upper-lower}.
 This argument is made rigorous in the following lemma.
\begin{lemma} --
\label{lem:extinction}
 For all $N$ sufficiently large, we have
\begin{equation}
\label{eq:extinction-1}
  p_N^+ (\theta, \mu) \ \leq \ 2^{n + 1} \,(\ln N)^{-1} \ + \ 3 \,(\ln N)^{-a} \ + \ n \,(\ln N)^{2n + 1} / N.
\end{equation}
\end{lemma}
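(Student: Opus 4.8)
The plan is to deduce \eqref{eq:extinction-1} by coupling the full process \eqref{eq:generator-1} with the mixing-only process \eqref{eq:generator-2} through their common graphical representation and isolating the three sources of error that can produce global expansion. First I would record the elementary but crucial observation that a local event occurring at a vertex in state $0$ leaves that vertex in state $0$, since in that case $\ind \{\eta (x) > \theta\} + B \,\ind \{\eta (x) = \theta\} = 0$. Consequently, writing $D$ for the event that some local event fires at a vertex of positive density during $[0, T_N]$, the two processes are driven by the same mixing clocks and differ only through such events, so on $D^c$ we have $\eta_t = \xi_t$ for all $t \leq T_N$.

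Next I would argue that global expansion forces at least one of three bad events. Set $A := \{\tau_D > T_N\}$ and $C := \{|\xi_{T_N}| > K_N\}$. On $A^c \cap D^c$ we have $\eta_{T_N} = \xi_{T_N} \in \Omega_-$; since a convex combination of values below $\theta$ stays below $\theta$ and a local event at a vertex below $\theta$ sends it to $0$, the set $\Omega_-$ is absorbing for the full process, so $\eta_t \in \Omega_-$ for all $t \geq T_N$ and the process never visits $\Omega_+$. By Lemma \ref{lem:upper-lower} this rules out global expansion. Hence the event of global expansion is contained in $A \cup C \cup (D \cap C^c)$, which gives $p_N^+ (\theta, \mu) \leq P (A) + P (C) + P (D \cap C^c)$.

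The probabilities $P (A)$ and $P (C)$ are supplied directly by Lemmas \ref{lem:dispersion} and \ref{lem:leaves}, contributing $2^{n + 1} (\ln N)^{-1} + 2 \,(\ln N)^{-a}$ and $(\ln N)^{-a}$ respectively, which already account for the first and (two thirds of the) second term. The remaining contribution requires a short argument. I would first note that, because $\mu \leq 1/2$, a mixing event is a convex combination assigning positive weight to each endpoint, so once a vertex acquires positive density it keeps it; thus $t \mapsto |\xi_t|$ is nondecreasing and on $C^c$ we have $|\xi_t| \leq K_N$ throughout $[0, T_N]$. Since the local-event Poisson clocks are independent of the mixing clocks that determine $\xi$, conditioning on the trajectory of $\xi$ turns the local events at occupied vertices into a Poisson process of integrated rate $\int_0^{T_N} |\xi_t| \,dt \leq K_N T_N$ on $C^c$, and a first-moment bound yields $P (D \cap C^c) \leq K_N T_N$. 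Finally I would estimate $K_N T_N = 4^{n \ln (\ln N)} \,n \ln (\ln N) / N \leq n \,(\ln N)^{2n + 1} / N$ for $N$ large, using $\ln 4 < 2$ and $\ln (\ln N) \leq \ln N$, and then sum the three contributions to recover \eqref{eq:extinction-1}.

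The main obstacle is the control of $D \cap C^c$: everything there hinges on the monotonicity of $|\xi_t|$, which relies on $\mu \leq 1/2$ and guarantees that the number of vulnerable vertices never exceeds $K_N$, together with the independence between the local and mixing clocks that legitimizes the Poisson first-moment estimate. The rest is bookkeeping with the bounds already established in Lemmas \ref{lem:leaves}, \ref{lem:collision} and \ref{lem:dispersion}, and the passage to the limit $p_N^+ (\theta, \mu) \to 0$ then follows since each of the three terms in \eqref{eq:extinction-1} tends to $0$ as $N \to \infty$.
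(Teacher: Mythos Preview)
Your proposal is correct and follows essentially the same approach as the paper: couple $\eta$ and $\xi$ through the shared graphical representation, observe they agree until a local clock rings at an occupied vertex, bound that event via the $K_N$ control on $|\xi_t|$ (the paper phrases this as $P(\min_{i \leq K_N} \zeta_i < T_N) \leq 1 - e^{-K_N T_N} \leq K_N T_N$, which is the same estimate as your first-moment bound), and combine with Lemmas \ref{lem:leaves} and \ref{lem:dispersion}. The only cosmetic difference is that the paper routes the union bound through $P(\eta_{T_N} \notin \Omega_-)$ rather than naming the events $A$, $C$, $D$ explicitly.
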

\begin{proof}
 The two processes \eqref{eq:generator-1} and \eqref{eq:generator-2} being constructed from the same graphical representation
 are equal as long as no local event occurs in any of the vertices not in state 0.
 Since local events occur at each vertex at rate one, having a collection $\zeta_1, \zeta_2, \zeta_3, \ldots$ of independent
 exponential random variables with parameter one, we deduce that
\begin{equation}
\label{eq:extinction-2}
  \begin{array}{l}
    P \,(\eta_t \not \equiv \xi_t \ \hbox{for some} \ t \leq T_N \,| \,|\xi_{T_N}| \leq K_N) \vspace*{4pt} \\ \hspace*{20pt} \leq \
    P \,(\min \,\{\zeta_i : i \leq K_N \} < T_N) \ = \
    1 - P \,(\min \,\{\zeta_i : i \leq K_N \} \geq T_N) \vspace*{4pt} \\ \hspace*{20pt} \leq \
    1 - \exp \,(- K_N T_N) \ \leq \ 1 - \exp \,(- n \,(\ln N)^{2n} \,\ln (\ln N) / N) \ \leq \ n \,(\ln N)^{2n + 1} / N \end{array}
\end{equation}
 for all $N$ large.
 From Lemmas \ref{lem:leaves} and \ref{lem:dispersion}, and \eqref{eq:extinction-2}, we obtain
 $$ \begin{array}{l}
     P \,(\eta_{T_N} \notin \Omega_-) \ \leq \
     P \,(\xi_{T_N} \notin \Omega_-) \ + \ P \,(\eta_t \not \equiv \xi_t \ \hbox{for some} \ t \leq T_N) \vspace*{4pt} \\ \hspace*{35pt} \leq \
     P \,(\tau_D > T_N) \ + \ P \,(\eta_t \not \equiv \xi_t \ \hbox{for some} \ t \leq T_N \,| \,|\xi_{T_N}| \leq K_N) \ + \ P \,(|\xi_{T_N}| > K_N) \vspace*{4pt} \\ \hspace*{70pt} \leq \
     2^{n + 1} \,(\ln N)^{-1} \ + \ 3 \,(\ln N)^{-a} \ + \ n \,(\ln N)^{2n + 1} / N \end{array} $$
 for all $N$ large.
 Since according to Lemma \ref{lem:upper-lower} we have
 $$ p_N^+ (\theta, \mu) \ = \ P \,(\eta_t \in \Omega_+ \ \hbox{for some} \ t) \ = \ P \,(\eta_t \notin \Omega_- \ \hbox{for all} \ t) \ \leq \ P \,(\eta_{T_N} \notin \Omega_-) $$
 the proof is complete.
\end{proof} \\ \\
 The theorem directly follows from Lemma \ref{lem:extinction} by observing that the right-hand side of \eqref{eq:extinction-1}
 tends to zero as the number of vertices $N$ goes to infinity.


\section{The process on the ring}
\label{sec:ring}

\indent This section is devoted to the proof of Theorem \ref{thm:ring}.
 To understand the process on the ring, the first step is to study its counterpart on the infinite one-dimensional lattice
 using a so-called block construction.
 The idea is to couple a certain collection of good events related to the infinite system with the set of open sites of
 a one dependent oriented site percolation process on
 $$ \mathcal H \ := \ \{(x, n) \in \Z \times \Z_+ : x + n \ \hbox{is even} \}. $$
 For a precise definition and a review of oriented site percolation in two dimensions, we refer the reader
 to Durrett \cite{durrett_1984}.
 This coupling together with results from \cite{durrett_1995} implies that, starting with a single occupied patch, there exists
 with positive probability a linearly expanding region that contains a positive density of patches above the Allee threshold.
 The second key step is to prove that in fact all the patches in this space-time region are above the Allee threshold, from
 which it follows that, with the same positive probability, the process on the finite ring starting with a single occupied patch
 hits an upper configuration before it hits a lower configuration.
 To prove linear expansion in space of the set of patches that exceed the Allee threshold, we observe that, under the assumptions
 of the theorem, there exists a constant $a$ fixed from now on such that
 $$ a \ < \ (1 - \mu)^{4T} \quad \hbox{and} \quad a \,\mu^2 \,(1 - \mu)^{8T} \ > \ \theta \quad \hbox{where} \quad T := 95. $$
 To define our collection of good events, we also introduce random variables that keep track of the number of mixing events and
 local events in certain space-time regions of the graphical representation.
 More precisely, we introduce the number of mixing events
 $$ \begin{array}{rcl}
      X_j & := & \card \{n : T_n (j, j + 1) \in (0, T) \} \quad \hbox{for} \ j = -1, -2 \vspace*{4pt} \\
      X_j & := & \card \{n : T_n (j - 1, j) \in (0, T) \} \quad \hbox{for} \ j = +1, +2 \vspace*{4pt} \\
      Y_j & := & \card \{n : T_n (j - 1, j) \in (T, 2T) \} \vspace*{4pt} \\ && \hspace{40pt} + \
                 \card \{n : T_n (j, j + 1) \in (T, 2T) \} \quad \hbox{for} \ j = -1, +1 \end{array} $$
 as well as the number of local events
 $$ Z_j \ := \ \card \{n : U_n (j) \in (T, 2T) \} \quad \hbox{for} \ j = -1, +1. $$
 From these random variables, we define the good event
 $$ \Omega \ := \ \{\min \,(X_{-1}, X_1, Z_{-1}, Z_1) \neq 0 \} \,\cap
                \,\{\max \,(X_{-2}, X_2) \leq 2T \} \,\cap
                \,\{\max \,(Y_{-1}, Y_1) \leq 4T \} $$
 as depicted in Figure \ref{fig:perco}.
 For every $(x, n) \in \mathcal H$, we define the good event $\Omega (x, n)$ similarly but from the graphical representation of
 the process in the space-time region
 $$ R (x, n) \ := \ (x - 2, x + 2) \times (2nT, 2nT + 2T). $$
 The motivation for introducing these events is that, conditioned on $\Omega (x, n)$, if the population density at patch $x$ at
 time $2nT$ exceeds $a$ then the same holds for the two adjacent patches $2T$ units of time later.
 By translation invariance of the evolution rules of the process in space and time, it suffices to prove the result for
 $x = n = 0$, which is done in the following lemma.

\begin{figure}[t]
\centering
\scalebox{0.40}{\input{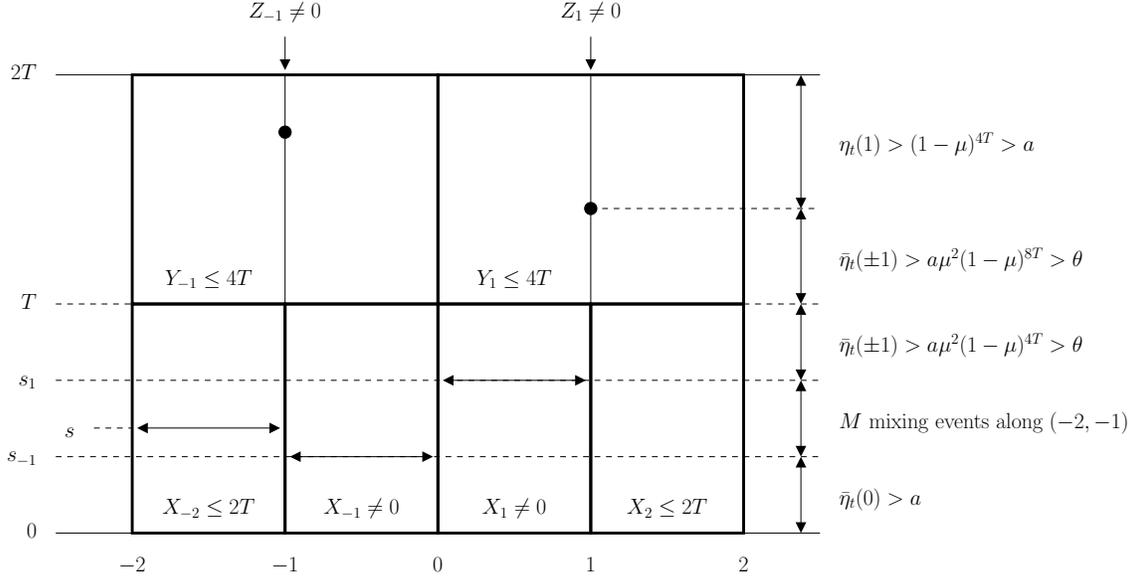}}
\caption{\upshape{Picture of the good event $\Omega = \Omega (0 ,0)$.}}
\label{fig:perco}
\end{figure}

\begin{lemma} --
\label{lem:invasion}
 We have $\{\eta_0 (0) > a \} \cap \Omega (0, 0) \subset \{\min \,(\eta_{2T} (- 1), \eta_{2T} (1)) > a \}$.
\end{lemma}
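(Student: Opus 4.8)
The plan is to follow the two-phase mechanism suggested by Figure~\ref{fig:perco}: in the window $(0,T)$ pure mixing floods the two neighbors of $0$ above the Allee threshold, and in the window $(T,2T)$ the local events $Z_{\pm1}$ lock them at carrying capacity. To keep the bookkeeping ordering-robust I would work with the coupled lower bound $\bar\eta\le\eta$ obtained by discarding all inflow from outside $\{-2,-1,0,1,2\}$, i.e. by resetting $\pm2$ to $0$ after each firing of the boundary edges $(-2,-1)$ and $(1,2)$, while keeping the same mixing and local updates as $\eta$. Since mixing is linear and local events preserve the pointwise order (a site above $\theta$ goes to $1$, one below $\theta$ to $0$), this truncation only removes mass, so $\bar\eta_t\le\eta_t$ for all $t$ and it suffices to bound $\bar\eta_{2T}(\pm1)$ from below. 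I would use the hypothesis $a\mu^2(1-\mu)^{8T}>\theta$ in the sharper form that $a>\theta$ and that a single transfer out of site $0$ already overshoots $\theta$.

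For the first window the crux is to analyze, for each neighbor separately, the first firing of the edge feeding it (these exist since $X_{\pm1}\neq0$) and to note that before that firing site $0$ exchanges mass only with the opposite neighbor, which reduces the dynamics to a two-site subsystem. Treating site $1$: before the first firing of $(0,1)$ the only events touching site $0$ are firings of $(-1,0)$ and leakage of site $-1$ through $(-2,-1)$, at most $X_{-2}\le2T$ of them, so the mass of $\{-1,0\}$ stays $\ge a(1-\mu)^{2T}$; as site $0$ starts above site $-1$ and averaging preserves this order while leakage only lowers site $-1$, we get $\bar\eta(0)\ge\tfrac12 a(1-\mu)^{2T}$ at that instant. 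The firing then deposits at least $\mu\cdot\tfrac12 a(1-\mu)^{2T}\ge a\mu^2(1-\mu)^{2T}$ on site $1$, where the bound $\tfrac12\mu\ge\mu^2$ (valid since $\mu\le1/2$) is the origin of the second power of $\mu$; the at most $X_2\le2T$ later erosions through $(1,2)$ leave $\bar\eta_T(1)\ge a\mu^2(1-\mu)^{4T}>\theta$, and site $-1$ is identical after swapping the two boundary edges. This overshoot is exactly what makes local events in $(0,T)$ harmless: before a neighbor is fed it sits at $0$ and a local update does nothing, and the moment it is fed it already exceeds $\theta$, so a local update can only push it up.

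In the second window the events $Z_{\pm1}\neq0$ do the locking. Before the first such update, at a time $t_*\in(T,2T)$, the density at $\pm1$ has been eroded only by the $Y_{\pm1}\le4T$ mixing events of that window, so $\bar\eta_{t_*}(\pm1)\ge a\mu^2(1-\mu)^{8T}>\theta$ and the update sends it to $1$; the remaining at most $4T$ firings on the edges at $\pm1$ cost a factor $1-\mu$ each, whence $\bar\eta_{2T}(\pm1)\ge(1-\mu)^{4T}>a$, and the same for $\eta$ since $\eta\ge\bar\eta$. The hard part, and the reason the constants are so wasteful, is that $X_{\pm1}$ are not bounded above, so the inner edges may fire arbitrarily often and in either order and site $0$ could a priori be drained before it has fed both neighbors; the ordering-free two-site reduction of the second paragraph is precisely what rules this out, and the extra factor $\mu$ together with the generous powers of $(1-\mu)$ is the price of a bound uniform over all realizations on $\Omega(0,0)$.
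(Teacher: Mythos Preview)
Your two–phase plan (flood the neighbors above $\theta$ on $(0,T)$, then let $Z_{\pm1}$ lock them on $(T,2T)$) is exactly the mechanism of the paper, and the truncated lower bound $\bar\eta\le\eta$ is a perfectly good substitute for the paper's device of first analyzing the mixing-only process $\xi$ and then observing that local events can only help once every relevant site is above $\theta$. The ordering claim $\bar\eta(0)\ge\bar\eta(-1)$, however, is not stable under local events (a local update at $-1$ may send that site to~$1$); this is harmless for the conclusion but your justification of $\bar\eta(0)\ge\tfrac12 a(1-\mu)^{2T}$ needs a short case analysis rather than the bare ordering assertion.

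The substantive gap is what happens \emph{after} the first feed. You write that once site~$1$ has received $\ge a\mu^2(1-\mu)^{2T}$, ``the at most $X_2\le 2T$ later erosions through $(1,2)$ leave $\bar\eta_T(1)\ge a\mu^2(1-\mu)^{4T}$'', accounting only for leakage through the boundary edge. But between the first $(0,1)$ firing and time $T$ the inner edge $(0,1)$ can fire arbitrarily many more times, and each such mixing lowers $\bar\eta(1)$ whenever $\bar\eta(0)<\bar\eta(1)$. This can occur: a $(-1,0)$ mixing (or a local event at~$1$) can push $\bar\eta(0)$ below $\bar\eta(1)$, after which subsequent $(0,1)$ firings genuinely erode site~$1$. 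Your two-site reduction controls the unbounded $X_{-1}$ only \emph{up to} the first feed; it says nothing about the unbounded $X_1$ \emph{after} the feed, so the claim that the reduction ``rules this out'' is not justified. The paper closes this gap by ordering the first firings, say $s_{-1}<s_1$, and then tracking $\min(\xi_t(-1),\xi_t(0),\xi_t(1))$ on $(s_1,T)$: inner mixings preserve this minimum, and only the bounded number $X_{-2}+X_2\le 4T$ of boundary mixings can erode it, giving the bound $a\mu^2(1-\mu)^{4T}$. Inserting this three-site minimum argument in place of your single-site bookkeeping makes your proof go through.
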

\begin{proof}
 The first step is to prove that the event on the left-hand side is included in the event that the population density at $-1$
 and 1 exceed the Allee threshold at time $2T$ for the process \eqref{eq:generator-2} that includes mixing events but excludes
 local events.
 Assume that
 $$ s_{-1} \ := \ T_1 (-1, 0) \ < \ T_1 (0, 1) \ =: \ s_1 \ < \ T. $$
 Note that the last inequality $s_1 < T$ follows from $X_1 \neq 0$.
 To study the process up until time $s_1$, we introduce the number of mixing events
 $$ M \ := \ \card \{n : T_n (-2, -1) \in (s_{-1}, s_1) \}, $$
 and in case $M \neq 0$, the time of the first mixing event
 $$ s \ := \ \inf \,(\{T_n (-2, -1) : n \geq 1 \} \,\cap \,(s_{-1}, s_1)). $$
 Since the population density at patch $-1$ and at patch 0 between time $s_{-1}$ and time $s$ are convex combinations
 of their counterpart at time $s_{-1}$, we have
 $$ \min \,(\xi_t (-1), \xi_t (0)) \ \geq \ \min \,(\xi_{s_{-1}} (-1), \xi_{s_{-1}} (0)) \ > \ \min \,(a \mu, a (1 - \mu)) \ = \ a \mu $$
 for all $t \in (s_{-1}, s)$. In particular,
 $$ \min \,(\xi_s (-1), \xi_s (0)) \ > \ a \,\mu \,(1 - \mu). $$
 Since $M \leq X_{-2} \leq 2T$, we deduce from a simple induction that
\begin{equation}
\label{eq:invasion-1}
  \min \,(\xi_t (-1), \xi_t (0)) \ > \ a \,\mu \,(1 - \mu)^M \ \geq \ a \,\mu \,(1 - \mu)^{2T} \ > \ \theta
\end{equation}
 for all $t \in (s_{-1}, s_1)$, and by definition of $s_1$,
 $$ \xi_{s_1} (1) \ = \ \mu \,\xi_{s_1 -} (0) + (1 - \mu) \,\xi_{s_1 -} (1) \ \geq \ \mu \,\xi_{s_1 -} (0) \ > \ a \,\mu^2 \,(1 - \mu)^M. $$
 In particular, using again our inductive reasoning and the fact that
 $$ \begin{array}{l}
    \card \{n : T_n (- 2, - 1) \in (s_1, T) \} \vspace*{4pt} \\ \hspace{50pt} + \
    \card \{n : T_n (1, 2) \in (s_1, T) \} \ \leq \ X_{-2} + X_2 - M \ \leq \ 4T - M \end{array} $$
 we deduce that
\begin{equation}
\label{eq:invasion-2}
 \begin{array}{rcl}
 \min \,(\xi_t (-1), \xi_t (0), \xi_t (1)) & \geq &
        (1 - \mu)^{4T - M} \ \min \,(\xi_{s_1} (-1), \xi_{s_1} (0), \xi_{s_1} (1)) \vspace*{4pt} \\ & > &
        (1 - \mu)^{4T - M} \ a \,\mu^2 \,(1 - \mu)^M \ = \ a \,\mu^2 \,(1 - \mu)^{4T} \ > \ \theta \end{array}
\end{equation}
 for all $t \in (s_1, T)$.
 Finally, since $Y_{-1} \leq 4T$ and $Y_1 \leq 4T$,
\begin{equation}
\label{eq:invasion-3}
 \begin{array}{rcl}
 \min \,(\xi_t (-1), \xi_t (1)) & \geq &
        (1 - \mu)^{4T} \ \min \,(\xi_T (-1), \xi_T (1)) \vspace*{4pt} \\ & > &
        (1 - \mu)^{4T} \ a \,\mu^2 \,(1 - \mu)^{4T} \ = \ a \,\mu^2 \,(1 - \mu)^{8T} \ > \ \theta \end{array}
\end{equation}
 for all $t \in (T, 2T)$.
 Returning to the system with local events, since
 $$ \xi_t (0) > \theta \ \ \hbox{for all} \ t \in (0, T) \quad \hbox{and} \quad \xi_t (\pm 1) > \theta \ \ \hbox{for all} \ t \in (s_{\pm 1}, 2T) $$
 according to \eqref{eq:invasion-1}--\eqref{eq:invasion-3}, these inequalities remain true for the original process \eqref{eq:generator-1}.
 In particular, the populations at patches $-1$ and 1 exceed $\theta$ between times $T$ and $2T$ therefore
 $$ \eta_t (-1) \ = \ 1 \ \ \hbox{for some} \ t \in (T, 2T) \quad \hbox{and} \quad \eta_t (1) \ = \ 1 \ \ \hbox{for some} \ t \in (T, 2T) $$
 since $Z_{-1} \neq 0$ and $Z_1 \neq 0$.
 Using again that $Y_{-1}, Y_1 \leq 4T$, we conclude that
 $$ \eta_{2T} (-1) \ \geq \ (1 - \mu)^{4T} \ > \ a \quad \hbox{and} \quad \eta_{2T} (1) \ \geq \ (1 - \mu)^{4T} \ > \ a $$
 which completes the proof of the lemma.
\end{proof} \\ \\
 To deduce from Lemma \ref{lem:invasion} the existence of a linearly expanding region with a positive density of patches above
 the Allee threshold, we now prove that the common probability of all our good events exceeds the critical value $p_c$ of
 one dependent oriented site percolation.
\begin{lemma} --
\label{lem:good-event}
 For $T = 95$, we have $P \,(\Omega (x, n)) \geq 1 - 3^{-36} > p_c$.
\end{lemma}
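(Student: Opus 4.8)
The plan is to exploit the fact that $\Omega = \Omega(0,0)$ is an intersection of events, each depending on a single one of the eight counting variables $X_{-1}, X_1, X_{-2}, X_2, Y_{-1}, Y_1, Z_{-1}, Z_1$, and that these eight variables are mutually independent. Independence is the first thing I would check: the graphical representation attaches an independent rate-one Poisson process to every edge and to every vertex, the four variables $X_j$ count arrivals on edge processes inside the window $(0,T)$, the two variables $Y_j$ count arrivals on edge processes inside the disjoint window $(T,2T)$, and the two variables $Z_j$ count arrivals on vertex processes inside $(T,2T)$; whenever two of these variables involve a common edge (for example $X_{-1}$ and $Y_{-1}$ both see edge $(-1,0)$), they read it over disjoint time intervals, so the independent-increments property of the Poisson process yields independence. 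Each variable is Poisson with an explicit mean: $X_{\pm1}, X_{\pm2}, Z_{\pm1}$ have mean $T$, whereas $Y_{\pm1}$, being a sum of two length-$T$ counts, has mean $2T$. Since the graphical representation is invariant under translations in space and time, $P(\Omega(x,n)) = P(\Omega)$ for every $(x,n) \in \mathcal H$, so it is enough to bound $P(\Omega)$.

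Next I would bound $P(\Omega^c)$ by a union bound over the eight ways $\Omega$ can fail. The four conditions $\min(X_{-1}, X_1, Z_{-1}, Z_1) \neq 0$ fail only when a $\mathrm{Poisson}(T)$ variable vanishes, each contributing $e^{-T}$. The other four are upper-tail events in which a Poisson variable overshoots twice its mean: the failures $\{X_{\pm 2} > 2T\}$ concern a $\mathrm{Poisson}(T)$ variable, and the failures $\{Y_{\pm 1} > 4T\}$ concern a $\mathrm{Poisson}(2T)$ variable. Each of these I would control with a Poisson large-deviation estimate, all four sharing the same rate $I(2) = 2\ln 2 - 1$ per unit mean, so that they decay like $e^{-T I(2)}$ and $e^{-2T I(2)}$. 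Collecting the contributions gives
\[
 P(\Omega^c) \ \leq \ 4\,e^{-T} \ + \ 2\,P(\mathrm{Poisson}(T) > 2T) \ + \ 2\,P(\mathrm{Poisson}(2T) > 4T),
\]
whose dominant contribution is the middle term coming from $X_{\pm 2}$.

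The last step is the numerical calibration, and this is where I expect the real work to be. Setting $T = 95$, one must verify that the right-hand side above is at most $3^{-36}$. This is delicate because the estimate is essentially sharp: the bare optimized Chernoff bound $P(\mathrm{Poisson}(T) > 2T) \leq (e/4)^{T}$ only pushes the sum below about $3^{-33}$, and to reach $3^{-36}$ with $T = 95$ one has to keep the polynomial (Stirling) prefactor in the Poisson tail rather than discarding it. Once $P(\Omega) \geq 1 - 3^{-36}$ is in hand, the strict inequality $1 - 3^{-36} > p_c$ is supplied by the explicit upper bound for the critical value of one-dependent oriented site percolation computed in Durrett \cite{durrett_1984}; this crude percolation estimate, together with the tail bounds above, is exactly what forces the non-optimal exponent $1140 = 12T$ in the hypothesis of Theorem \ref{thm:ring}. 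The main obstacle is thus quantitative rather than conceptual: one must pair a Poisson tail bound sharp enough to survive the factor $3^{-36}$ with the specific percolation threshold, so that the single constant $3^{-36}$ simultaneously bounds $P(\Omega^c)$ from above and $p_c$ from below.
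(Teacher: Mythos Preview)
Your proposal is correct and follows essentially the same route as the paper: identify the eight counting variables as independent Poisson (means $T$ or $2T$), apply a union bound to obtain $P(\Omega^c)\le 4e^{-T}+2P(\mathrm{Poisson}(T)>2T)+2P(\mathrm{Poisson}(2T)>4T)$, check numerically that this is below $3^{-36}$ at $T=95$, and invoke the contour estimate from \cite{durrett_1984} for $p_c<1-3^{-36}$. Your remark that the bare Chernoff bound $(e/4)^T$ falls short and that one must retain the polynomial prefactor is a useful addition the paper leaves implicit.
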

\begin{proof}
 The key is simply to observe that
\begin{itemize}
 \item $X_{-2}, X_{-1}, X_1$ and $X_2$ are Poisson random variables with parameter $T$, \vspace*{4pt}
 \item $Y_{-1}$ and $Y_1$ are Poisson random variables with parameter $2T$ and \vspace*{4pt}
 \item $Z_{-1}$ and $Z_1$ are Poisson random variables with parameter $T$.
\end{itemize}
 Using in addition that these random variables are independent, we deduce that
 $$ \begin{array}{rcl}
     P \,(\Omega (x, n)) & = & P \,(\Omega) \ \geq \ 1 - 4 \,P \,(X_1 = 0) - 2 \,P \,(X_2 > 2T) - 2 \,P \,(Y_1 > 4T) \vspace*{8pt} \\ & = &
       \displaystyle 1 \ - \ 4 \,e^{-T} \ - \ 2 \sum_{n > 2T} \ \frac{T^n}{n!} \ e^{-T} \ - 2 \sum_{n > 4T} \ \frac{(2T)^n}{n!} \ e^{-2T} \ > \ 1 - 3^{-36} \end{array} $$
 when $T = 95$.
 The second inequality $p_c < 1 - 3^{-36}$ in the statement follows from the contour argument described
 in, e.g., \cite{durrett_1984}, Section 10.
 This completes the proof of the lemma.
\end{proof} \\ \\
 Lemmas \ref{lem:invasion} and \ref{lem:good-event} and the fact that
 $$ R (x, n) \,\cap \,R (x', n') \ = \ \varnothing \quad \hbox{whenever} \ |x - x'| > 1 \ \hbox{or} \ n \neq n' $$
 are the assumptions of Theorem 4.3 in Durrett \cite{durrett_1995} with $M = 1$ and $\gamma = 3^{-36}$ from which it follows
 that, for the infinite system starting with a single occupied patch at the origin,
 $$ \bar W_n \ := \ \{x \in \Z : (x, n) \in \mathcal H \ \hbox{and} \ \eta_{2nT} (x) > a \} $$
 dominates stochastically the set of wet sites $W_n$ at level $n$ of a one dependent oriented site percolation process with
 parameter $1 - \gamma$ and initial condition $W_0 = \bar W_0$.
 Since $1 - \gamma > p_c$ we deduce that, with positive probability at least equal to the percolation probability, the set of
 patches that exceed the constant $a$ expands linearly.
 This only proves persistence of the metapopulation of the infinite lattice, which is not sufficient to deduce global expansion
 of the system on the ring.
 The last step is to show that all patches in the expanding region exceed the Allee threshold.
 More precisely, on the event that percolation occurs, i.e., $W_n \neq \varnothing$ for all $n$, we have
\begin{equation}
\label{eq:expand}
  \lim_{n \to \infty} l_n \ := \ \lim_{n \to \infty} \min \, W_n \ = \ - \infty \quad \hbox{and} \quad
  \lim_{n \to \infty} r_n \ := \ \lim_{n \to \infty} \max \, W_n \ = \ + \infty
\end{equation}
 and thinking of the infinite system as being coupled with one dependent oriented site percolation in such a way
 that $W_n \subset \bar W_n$ for all $n$, we have the following lemma.
\begin{lemma} --
\label{lem:expansion}
 Assume that $W_n \neq \varnothing$ for all $n$. Then, for all $x \in \Z$,
 $$ \eta_t (x) \ > \ \theta \quad \hbox{for all times $t$ sufficiently large}. $$
\end{lemma}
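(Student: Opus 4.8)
The plan is to upgrade Lemma~\ref{lem:invasion} from a statement at the two times $2nT$ and $2(n+1)T$ to one that controls the density at every intermediate time, and then to exploit the one-dimensional geometry to pin down the patches lying between consecutive wet sites. First I would reread the proof of Lemma~\ref{lem:invasion} and note that \eqref{eq:invasion-1}--\eqref{eq:invasion-3} already give more than the stated endpoints: on $\{\eta_{2nT}(x) > a\} \cap \Omega(x, n)$ the central patch obeys $\eta_t(x) > \theta$ throughout $(2nT, 2(n+1)T)$, while each neighbor $x \pm 1$ obeys $\eta_t(x \pm 1) > \theta$ from its first mixing time up to $2(n+1)T$, where it reaches the value $a$. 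This rests on the fact that on $\Z$ a patch interacts only with its two nearest neighbors, so its density can fall below $\theta$ only through a mixing event with a neighbor that is itself below $\theta$, a local event merely sending a supra-threshold value to $1$. I would record this as an \textbf{absorption principle}: if $\eta_{t_0}(x) > \theta$ and both neighbors satisfy $\eta_s(x \pm 1) > \theta$ for every $s \in [t_0, t_1]$, then $\eta_s(x) > \theta$ for every such $s$, since each update of $x$ on the interval is either a convex combination of values exceeding $\theta$ or a jump to $1$.

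Next I would grow an interval of patches that are permanently above $\theta$. By hypothesis $W_n \neq \varnothing$ for all $n$, so by \eqref{eq:expand} the edges $l_n$ and $r_n$ of the percolation cluster, and hence of the dominating set $\bar W_n = \{x : \eta_{2nT}(x) > a\} \supseteq W_n$, diverge to $\pm\infty$. The strengthened Lemma~\ref{lem:invasion} shows that a \emph{single} wet parent already drives a child above $a$, since $\eta_{2(n+1)T}(x \pm 1) \ge (1-\mu)^{4T} > a$; thus a correct-parity site at level $n+1$ is wet as soon as one of its two parents is wet and the associated good event holds, so any interior gap in $\bar W_n$ is refilled from its two flanking wet sites and can survive a level only if both flanking good events fail, an event of probability $\gamma^2$. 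Combining this with the absorption principle, I would argue inductively that there is an interval $[L_n, R_n]$, with $L_n \to -\infty$ and $R_n \to +\infty$, every site of which exceeds $\theta$ at time $2nT$ and stays above $\theta$ thereafter: the invasion supplies supra-threshold neighbors just outside the current interval, and the absorption principle then holds the enclosed patches above $\theta$ without any gap in continuous time, the center window $(2nT, 2(n+1)T)$ of one level overlapping the neighbor windows of the adjacent parity level.

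Finally, fixing $x \in \Z$, the divergence of $L_n$ and $R_n$ gives $L_n < x < R_n$ for all large $n$, so $x$ enters the filled interval at some level and consequently $\eta_t(x) > \theta$ for all $t$ beyond the corresponding block, which is the assertion of the lemma; on the ring this persistence is what Lemma~\ref{lem:upper-lower} converts into global expansion. I expect the principal obstacle to be precisely the bookkeeping of the second paragraph: one must check that the continuous-time windows furnished by consecutive blocks leave no uncovered instant at a fixed site, and that a patch which is wet only at alternate levels is nonetheless kept above $\theta$ during the intervening off-parity blocks by its two flanking wet neighbors. Controlling the rare interior gaps left by failing good events, and verifying that they are refilled before they can drag a neighbor below $\theta$, is where the bulk of the remaining effort lies; the absorption principle of the first paragraph is the device that reduces all of this to checking that both neighbors of a given patch are supra-threshold on each relevant time window.
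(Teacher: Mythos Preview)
Your two main ingredients---the continuous-time bounds \eqref{eq:invasion-1}--\eqref{eq:invasion-3} extracted from Lemma~\ref{lem:invasion}, and the ``absorption principle'' that a supra-threshold patch flanked by supra-threshold neighbors stays supra-threshold---are exactly the ones the paper uses. Where your proposal diverges is in the mechanism for growing the filled interval: you try to track $\bar W_n$ itself, worry about interior gaps where the good event fails, and invoke a probabilistic $\gamma^2$ argument to say gaps are refilled. This is the part you yourself flag as ``the bulk of the remaining effort,'' and it is in fact unnecessary.

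The paper bypasses the whole gap issue by working not with $\bar W_n$ but with the \emph{percolation} set $W_n\subset\bar W_n$. Because $l_n=\min W_n$ and $r_n=\max W_n$ are wet, there exist open (hence good) nearest-neighbor paths $(0,0)\to(l_n,n)$ and $(0,0)\to(r_n,n)$. Along each such path the strengthened invasion estimates give a continuous-time corridor of patches above~$\theta$ (equations \eqref{eq:right} and \eqref{eq:left} in the paper). One then proves by a direct induction on $m$ that \emph{every} patch between the two paths stays above $\theta$ on $[2mT,2(m+1)T)$: local events and interior mixing events preserve the property by convexity, and a boundary mixing event that violated it would contradict the corridor bounds along the path. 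No interior good events are needed, and no probabilistic refilling argument enters---the region between the two open paths is filled deterministically by your absorption principle, once the paths supply the supra-threshold flanks. Since $l_n\to-\infty$ and $r_n\to+\infty$ on the survival event, any fixed $x$ is eventually trapped between the paths.

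So your plan is not wrong, but the detour through gaps in $\bar W_n$ and their probabilistic refilling is a complication that the paper avoids by choosing the right boundary curves: open percolation paths rather than the raw edge of $\bar W_n$. Replace your second paragraph by ``follow the open paths to $l_n$ and $r_n$ and induct on levels using the absorption principle to fill the enclosed region,'' and the bookkeeping you were worried about disappears.
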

\begin{proof}
 Recall from the proof of Theorem 4.3 in Durrett \cite{durrett_1995} that the processes are coupled in such a way that the set of open
 sites for the percolation process is included in the set of good sites, where site $(x, n)$ is said to be good whenever the
 good event $\Omega (x, n)$ occurs.
 This, together with the definition of the right edge, implies that there is a good path from site $(0, 0)$ to site $(r_n, n)$,
 i.e., a sequence of integers $x_0 = 0, x_1, \ldots, x_n = r_n$ such that
 $$ (x_m, m) \ \hbox{is good for} \ m = 0, 1, \ldots, n \quad \hbox{and} \quad |x_m - x_{m - 1}| = 1 \ \hbox{for} \ m = 1, 2, \ldots, n. $$
 Since $\eta_0 (0) > a$, it follows from \eqref{eq:invasion-1}--\eqref{eq:invasion-3} in the proof of Lemma \ref{lem:invasion} that,
 for $m = 0, 1, \ldots, n$,
\begin{equation}
\label{eq:right}
  \begin{array}{rcl}
   \eta_t (x_m) \ > \ \theta & \hbox{for all} & t \in [2mT, 2mT + T) \vspace*{4pt} \\
   \eta_t (x_m + 1) \ > \ \theta & \hbox{for all} & t \in [2mT + T, 2mT + 2T). \end{array}
\end{equation}
 Similarly, there is a path $(x_0, 0) \to (x_{-1}, 1) \to \cdots \to (x_{-n}, n) = (l_n, n)$ such that
\begin{equation}
\label{eq:left}
  \begin{array}{rcl}
   \eta_t (x_{-m}) \ > \ \theta & \hbox{for all} & t \in [2mT, 2mT + T) \vspace*{4pt} \\
   \eta_t (x_{-m} - 1) \ > \ \theta & \hbox{for all} & t \in [2mT + T, 2mT + 2T) \end{array}
\end{equation}
 for $m = 0, 1, \ldots, n$, and we may assume that $x_{-m} \leq x_m$ for all $m$.
 We claim that all patches in the space-time region delimited by \eqref{eq:right} and \eqref{eq:left} are above the
 Allee threshold $\theta$, i.e.,
\begin{equation}
\label{eq:right-left}
  \begin{array}{rcl}
   \eta_t (x) \ > \ \theta & \hbox{for all} & (x, t) \in [x_{-m}, x_m] \times [2mT, 2mT + T) \vspace*{4pt} \\
   \eta_t (x) \ > \ \theta & \hbox{for all} & (x, t) \in [x_{-m} - 1, x_m + 1] \times [2mT + T, 2mT + 2T), \end{array}
\end{equation}
 which we prove by induction.
 Assume that \eqref{eq:right-left} holds for some $m < n$.
 The fact that this again holds at time $t = 2mT + 2T$ simply follows from the fact that
 $$ [x_{- m - 1}, x_{m + 1}] \ \subset \ [x_{-m} - 1, x_m + 1] \quad \hbox{since} \quad x_{- m - 1} \geq x_{-m} - 1 \ \ \hbox{and} \ \ x_{m + 1} \leq x_m + 1. $$
 To prove that this holds at later times, we distinguish three types of events:
\begin{itemize}
 \item Local events cannot violate the first line of \eqref{eq:right-left} since patches above the Allee threshold can only
  experience local expansions to their carrying capacity. \vspace*{4pt}
 \item Mixing events in $[x_{- m - 1}, x_{m + 1}]$ cannot violate the first line of \eqref{eq:right-left} since the
  new states of interacting patches above the Allee threshold are again above the Allee threshold. \vspace*{4pt}
 \item Mixing events along $(x_{- m - 1} - 1, x_{- m - 1})$ or $(x_{m + 1}, x_{m + 1} + 1)$ can violate the first line
  of \eqref{eq:right-left} but this would contradict either \eqref{eq:right} or \eqref{eq:left}.
\end{itemize}
 This proves the first line of \eqref{eq:right-left} at step $m + 1$ while the second line follows from the exact same reasoning.
 From \eqref{eq:right-left}, we deduce that
 $$ \eta_t (x) \ > \ \theta \quad \hbox{for all} \quad (x, t) \in [l_n, r_n] \times [2mT, 2mT + 2T). $$
 In particular, the lemma follows from \eqref{eq:expand}.
\end{proof} \\ \\
 Returning to the process on the ring, we deduce from Lemma \ref{lem:expansion} that, with positive probability at least
 equal to the percolation probability and starting with a single patch in state~1, the system reaches an upper configuration
 before it reaches a lower configuration, an event that leads to global expansion according to Lemma \ref{lem:upper-lower}.
 This completes the proof of Theorem \ref{thm:ring}.





\begin{thebibliography}{10}

\bibitem{durrett_1984}
 Durrett, R. (1984). Oriented percolation in two dimensions.
\emph{Ann. Probab.} \textbf{12} 999--1040.

\bibitem{durrett_1995}
 Durrett, R. (1995). Ten lectures on particle systems.
 In \emph{Lectures on probability theory (Saint-Flour, 1993)}, volume 1608 of \emph{Lecture Notes in Math.}, pages 97--201.
 Springer, Berlin.

\bibitem{harris_1972}
 Harris, T. E. (1972). Nearest neighbor Markov interaction processes on multidimensional lattices.
\emph{Adv. Math.} \textbf{9} 66--89.

\bibitem{kang_lanchier_2011}
 Kang, Y. and Lanchier, N. (2011). Expansion or extinction: deterministic and stochastic two-patch models with Allee effects.
\emph{J. Math. Biol.} \textbf{62} 925--973.

\end{thebibliography}
\end{document}